\theoremstyle{remark}
\newtheorem{thm}{{\bf Theorem}}[section] 
\newtheorem{lem}[thm]{{\bf Lemma}}
\newtheorem{theorem}{Theorem}[section]
\newtheorem{conj}[theorem]{Conjecture}
\begin{document}
\begin{CJK*}{GBK}{song}
\title{Strongly regular generalized partial geometries and associated LDPC codes\footnote{Supported by  NSFC under Grant 12171139  and 11871019.}}

\author{Lijun Ma, Changli Ma, Zihong Tian\thanks{Corresponding author. E-mail address: tianzh68@163.com } \\
\small   School of Mathematical Sciences, Hebei Normal University, Shijiazhuang 050024, P. R. China}

\date{}
\maketitle
\def\binom#1#2{{#1\choose#2}}
\def \lb{\lbrack}
\def \rb{\rbrack}
\def \l{\langle}
\def \r{\rangle}
\def \m{\equiv}
\def \lb{\lbrack}
\def\rb{\rbrack}
\def\sub{\subseteq}
\def\inf{\infty} \def\om{\omega}
 \def\ba{\bigcap}
 \def\bu{\bigcup}
\def\sm{\setminus}
\def\ct{{\cal T}}
\def\ca{{\cal A}}
\def\cb{{\cal B}}
\def \o{\overline}
\def \a{\alpha}
\begin{minipage}[t]{13cm}
{\bf Abstract:}
In this paper, we introduce strongly regular generalized partial geometries of grade $r$, which generalise partial geometries and strongly regular $(\alpha,\beta)$-geometries.
By the properties of quadrics in PG$(2,q)$ and PG$(3,q)$, we construct two classes of strongly regular generalized partial geometries of grade $3$. Besides, we define low-density parity-check (LDPC) codes by considering the combinatorial structures of strongly regular generalized partial geometries and derive bounds on minimum distance, dimension and girth for the LDPC codes.

\vspace{0.3cm}
{\bf Key words:}~partial geometries, ~generalized partial geometries, ~quadrics, ~LDPC codes
\vspace{0.3cm}

{\bf MSC:}~05B25, 51E20, 94B05
\vspace{0.3cm}

\end{minipage}

\baselineskip=17pt

\section{Introduction}
Partial geometries are members of a broad class of combinatorial configurations with geometric properties referred to as finite geometries and closely related to combinatorial design, graph theory and coding, please refer to \cite{abc1,SA,St,abc2,abc3}   for details.
In 1994, Clerck and Maldeghem \cite{abc4} generalized partial geometry to $(\alpha,\beta)$-geometry.

An \emph{$(\alpha, \beta)$-geometry} with parameters $(s,t)$ is an incidence structure $(\mathcal{P},\mathcal{B})$,  where $\mathcal{P}$ is a finite non-empty set of elements called \emph{points}, $\mathcal{B}$ is a family of subsets of $\mathcal{P}$ called \emph{blocks}, incidence being containment such that

(i) any two distinct points are incident with at most one block;

(ii) each block is incident with exactly $s+1$ points;

(iii) each point is incident with exactly $t+1$ blocks;

(iv) for any point $P\in \mathcal{P}$ and any block $B\in \mathcal{B}$ with $P\notin B$, there are exactly $\alpha$ or $\beta$ points on $B$ joined to $P$, and call $B$ an \emph{$\alpha$-block} or  \emph{$\beta$-block} with respect to $P$.

The \emph{point graph} of an incidence structure $(\mathcal{P},\mathcal{B})$ has vertex set $\mathcal{P}$ with two vertices, $P_1$ and $P_2$, connected if and only if there is a block $B\in\mathcal{B} $ such that $P_1,P_2\in B$.
A $k$-regular graph on $v$ vertices is called a \emph{strongly regular graph} with parameters $(v,k,\lambda,\mu)$ if any two adjacent vertices have $\lambda$ common neighbours and any two non-adjacent vertices have $\mu$ common neighbours.
An $(\alpha, \beta)$-geometry is called \emph{strongly regular} if its point graph is a strongly regular graph.

In 2001, Hamilton and Mathon \cite{abc5} studied strongly regular $(\alpha, \beta)$-geometries and gave some necessary conditions for existence and constructions.
Especially, a strongly regular $(0, \beta)$-geometry is a \emph{semipartial geometry} \cite{abc6}; if there exists a unique $\alpha$-block for each point, it is an \emph{almost partial geometry} \cite{abc7}.
When $\alpha=\beta$, an $(\alpha, \beta)$-geometry is a \emph{partial geometry} PG$(s,t,\alpha)$ \cite{abc1}. The point graph of a PG$(s,t,\alpha)$ is strongly regular with parameters $((s+1)(st/\alpha+1),s(t+1),s-1+t(\alpha-1),\alpha(t+1))$.
Note that the point graph of an $(\alpha, \beta)$-geometry  is not necessarily a  strongly regular graph.

In this paper, we will further extend $(\alpha,\beta)$-geometry and propose the concept of generalized partial geometry.

Throughout  this paper, we always assume that $q$ is an odd prime power.
In Section 2, we first generalise strongly regular $(\alpha,\beta)$-geometries to strongly regular generalized partial geometries of grade $r$.
In Section 3, we construct two classes of strongly regular generalized partial geometries of grade $3$ based on quadrics in PG$(2,q)$ and PG$(3,q)$.
In Section 4, we study the LDPC codes related to strongly regular generalized partial geometries.
In Section 5, we discuss the $2$-ranks of the incidence matrices of strongly regular generalized partial geometries from quadrics.
Further, we show the performance of the LDPC codes.

\section{Generalized partial geometries of grade $r$}
In this section, we first generalise strongly regular $(\alpha,\beta)$-geometries to strongly regular generalized partial geometries of grade $r$.
Then we discuss the relations of the parameters for strongly regular generalized partial geometries.

A \emph{generalized partial geometry of grade $r$}, denoted by PG$(s,t;\alpha_1,\alpha_2,\dots,\alpha_r)$, is an incidence structure $(\mathcal{P},\mathcal{B})$,  where $\mathcal{P}$ is a finite non-empty set of elements called \emph{points}, $\mathcal{B}$ is a family of subsets of $\mathcal{P}$ called \emph{blocks}, incidence being containment such that

(i) any two distinct points are incident with at most one block;

(ii) each block is incident with exactly $s+1$ points;

(iii) each point is incident with exactly $t+1$ blocks;

(iv) for any point $P\in \mathcal{P}$ and any block $B\in \mathcal{B}$ with $P\notin B$, there is $\alpha\in\{\alpha_1,\alpha_2,\dots,\alpha_r\}$ such that there are $\alpha$ points on $B$ joined to $P$, and call $B$ an \emph{$\alpha$-block} with respect to $P$;

(v) for each $\alpha\in\{\alpha_1,\alpha_2,\dots,\alpha_r\}$, there exists a point $P$ and a block $B$ with $P\notin B$ such that there are $\alpha$ points on $B$ joined to $P$.

Obviously, a generalized partial geometry of grade $2$ is an $(\alpha, \beta)$-geometry.
Suppose $v=|\mathcal{P}|$ and $n=|\mathcal{B}|$, then $v(t+1)=n(s+1)$.
We say a generalized partial geometry PG$(s,t;\alpha_1,\alpha_2,\dots,\alpha_r)$ is \emph{strongly regular} if its point graph is a strongly regular graph, denoted by SRPG$(s,t;\alpha_1,\alpha_2,\dots,\alpha_r;\lambda,\mu)$, and the parameters are
$$k=s(t+1),v=\frac{k(k-\lambda-1)}{\mu}+k+1.$$

The incidence of a generalized partial geometry $(\mathcal{P},\mathcal{B})$ can be described by a $v\times n$ \emph{incidence matrix} $M=(m_{ij})$, with rows indexed by the points, columns indexed by the blocks, and
$$m_{ij}=\left\{\begin{array}{ll}1&P_i\in B_j;\cr 0&P_i\not\in B_j, \end{array}\right.$$
where $P_i\in\mathcal{P},B_j\in\mathcal{B},0\leq i\leq v-1,0\leq j\leq n-1$.

From the properties of strongly regular graphs \cite{ab1} and the definition of SRPG$(s,t;$ $\alpha_1,\alpha_2,\dots,\alpha_r;\lambda,\mu)$, we   have the following results.
\begin{lem}\label{20}
Suppose $(\mathcal{P},\mathcal{B})$ is an SRPG$(s,t;\alpha_1,\alpha_2,\dots,\alpha_r;\lambda,\mu)$. Then the following conditions hold:

(1) $\mu~|~k(k-\lambda-1)$ and $(s+1)~|~v(t+1);$

(2) The point graph of $(\mathcal{P},\mathcal{B})$ has $3$ eigenvalues, one of which is $k$, the other two are $u_1, u_2$, where  $u_1<u_2$ with the  multiplicities $f_1, f_2$  and
$$~f_1+f_2=v-1,~u_1f_1+u_2f_2=-k, u_i^2+(\mu-\lambda)u_i-(k-\mu)=0, i=1,2.$$

\end{lem}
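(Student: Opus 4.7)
The plan is to dispatch (1) by elementary counting and (2) by the standard adjacency matrix calculation for strongly regular graphs; both facts are essentially immediate from the definitions in the excerpt, so I will mostly organise the bookkeeping rather than overcome any substantial obstacle.

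For (1), the formula $v = k(k-\lambda-1)/\mu + k + 1$ built into the definition of SRPG$(s,t;\alpha_1,\ldots,\alpha_r;\lambda,\mu)$ forces $v-k-1 = k(k-\lambda-1)/\mu$ to be an integer, whence $\mu \mid k(k-\lambda-1)$. For the second divisibility I would double-count flags $(P,B)$ with $P\in B$: condition (iii) gives $v(t+1)$ such flags when counted from the point side, while condition (ii) gives $n(s+1)$ when counted from the block side. Hence $v(t+1) = n(s+1)$ with $n = |\mathcal{B}|$ an integer, and $(s+1) \mid v(t+1)$ follows.

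For (2), let $A$ be the adjacency matrix of the point graph and $J$ the all-ones matrix of the same size. The strongly regular assumption (each pair of adjacent points has $\lambda$ common neighbours and each pair of non-adjacent points has $\mu$) translates into
$$A^2 = kI + \lambda A + \mu(J - I - A),$$
so that
$$A^2 + (\mu - \lambda)A - (k-\mu)I = \mu J.$$
The all-ones vector is an eigenvector of $A$ with eigenvalue $k$ (by $k$-regularity). For any eigenvector $x$ orthogonal to the all-ones vector, $Jx = 0$, and if $Ax = ux$ then $u$ satisfies $u^2 + (\mu-\lambda)u - (k-\mu) = 0$. This quadratic has two real roots, which I label $u_1 < u_2$, with multiplicities $f_1, f_2$ as eigenvalues of $A$. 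Since $A$ is symmetric (hence diagonalizable) and has the $k$-eigenspace of dimension $1$, we obtain $f_1 + f_2 = v - 1$. Finally, the absence of loops gives $\operatorname{tr}(A)=0$, i.e.\ $k + u_1 f_1 + u_2 f_2 = 0$, and rearranging yields $u_1 f_1 + u_2 f_2 = -k$.

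The only point requiring a word of care is that $u_1$ and $u_2$ are genuinely distinct and real, which is guaranteed because the point graph is a non-trivial strongly regular graph (so the discriminant $(\mu-\lambda)^2 + 4(k-\mu)$ is positive). There is no substantive obstacle: part (1) is a one-line counting argument plus an integrality observation, and part (2) is the textbook spectrum computation for an SRG applied in the present setting.
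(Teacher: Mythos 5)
Your proof is correct and is exactly the standard argument the paper is invoking: the paper gives no explicit proof of this lemma, simply citing the properties of strongly regular graphs and the definition of an SRPG (including the built-in relation $v=k(k-\lambda-1)/\mu+k+1$ and the flag count $v(t+1)=n(s+1)$, both of which you reproduce). The only caveat worth recording is that the one-dimensionality of the $k$-eigenspace uses connectivity of the point graph, i.e.\ $\mu>0$, which you implicitly cover by restricting to the non-degenerate case.
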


\begin{lem}\label{21}
Suppose $(\mathcal{P},\mathcal{B})$ is a PG$(s,t;\alpha_1,\alpha_2,\dots,\alpha_r)$ with $v=|\mathcal{P}|$. For any two points $P,Q\in\mathcal{P}$, if there are $p_i$ blocks on $P$ that are $\alpha_i$-blocks with respect to $Q$ when $P$ and $Q$ are in the same block,
$l_i$ blocks otherwise, $1\leq i\leq r$, then $(\mathcal{P},\mathcal{B})$ is strongly regular, and has parameters
$$k=s(t+1),\lambda=\sum_{i=1}^{r}(\alpha_i-1)p_i+(s-1),\mu=\sum_{i=1}^{r}\alpha_il_i$$
and $\sum_{i=1}^{r}p_i=t$, $\sum_{i=1}^{r}l_i=t+1$.
\end{lem}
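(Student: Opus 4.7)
The plan is a direct double-counting argument exploiting axiom (i): any two distinct points lie in at most one common block. Under this axiom, the neighbours of a fixed point $P$ and the length-two paths through it can be partitioned cleanly according to which block through $P$ they use, with no overcounting. In particular, $P$ lies on $t+1$ blocks, each contributing $s$ further points; these $s(t+1)$ neighbours are distinct, so the point graph is regular of degree $k=s(t+1)$.

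For $\lambda$, fix adjacent points $P,Q$ and let $B_0$ be their unique common block. Their common neighbours split into two parts: (a) the $s-1$ points of $B_0\setminus\{P,Q\}$, all trivially joined to both; (b) points lying on one of the remaining $t$ blocks through $P$. By axiom (i), no type (b) point coincides with a type (a) point, and contributions from different blocks through $P$ are pairwise disjoint. For any block $B$ through $P$ other than $B_0$ we have $Q\notin B$, so $B$ is an $\alpha_i$-block with respect to $Q$ for some $i$; the $\alpha_i$ points of $B$ joined to $Q$ include $P$ itself (since $P$ is joined to $Q$), hence $B$ contributes exactly $\alpha_i-1$ common neighbours. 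Summing over the $p_i$ blocks of each type yields $\lambda=(s-1)+\sum_{i=1}^{r}(\alpha_i-1)p_i$ and $\sum_{i=1}^{r}p_i=t$.

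For $\mu$, fix non-adjacent $P,Q$, so no block contains both. Each of the $t+1$ blocks through $P$ misses $Q$ and is therefore an $\alpha_i$-block for some $i$; because $P$ is not joined to $Q$, all $\alpha_i$ points of that block joined to $Q$ are distinct from $P$ and automatically joined to $P$. Axiom (i) again rules out overlap between different blocks through $P$, and summing yields $\mu=\sum_{i=1}^{r}\alpha_i l_i$ with $\sum_{i=1}^{r}l_i=t+1$.

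The only delicate step is the accounting of $P$ inside each $\alpha_i$ count — subtracted in the adjacent case, kept in the non-adjacent case — but this is forced by whether or not $P$ is itself joined to $Q$. Once this is handled, the rest is routine bookkeeping; the hypothesis that $p_i$ and $l_i$ depend only on whether $P,Q$ share a block (and not on the particular pair) is precisely what makes $\lambda$ and $\mu$ uniform, so the point graph is strongly regular with the stated parameters.
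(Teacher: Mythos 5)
Your proof is correct, and the paper in fact states this lemma without giving any proof at all, so there is nothing to compare against beyond the implicit standard argument. Your double count --- partitioning the neighbours of $P$ by the unique block through $P$ containing them (justified by axiom (i)), and carefully subtracting $P$ from the $\alpha_i$ points joined to $Q$ in the adjacent case while keeping all $\alpha_i$ in the non-adjacent case --- is exactly the routine verification the lemma tacitly relies on, and the identities $\sum_{i=1}^{r}p_i=t$ and $\sum_{i=1}^{r}l_i=t+1$ fall out correctly from counting the blocks through $P$ not containing $Q$.
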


Obviously, if there is a group action that transforms two adjacent points to two fixed points, and also transforms two non-adjacent points to two fixed points in a generalized partial geometry PG$(s,t;\alpha_1,\alpha_2,\dots,\alpha_r)$, then it is strongly regular.

\section{Constructions of SRPGs}
In this section, we construct two classes of strongly regular generalized partial geometries of grade $3$ based on quadrics in PG$(2,q)$ and PG$(3,q)$.
Let $\mathbb{F}_{q}$ be the finite field with $q$ elements, and PG$(m,q)$ the classical projective space over $\mathbb{F}_q$.
In PG$(m,q)$, the points are $0$-subspaces, the lines are $1$-subspaces, and the $r$-flats $(0\leq r\leq m)$ are $r$-subspaces.
Denote the set of non-zero elements of $\mathbb{F}_{q}$ by $\mathbb{F}_{q}^*$.
\begin{lem}\cite{wan2}\label{lem1}
The projective general linear  group PGL$_{m+1}(q)$ is transitive on the set of $r$-flats $(0\leq r\leq m)$ in PG$(m, q)$.
\end{lem}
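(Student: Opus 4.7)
The plan is to reduce the statement to a standard fact from linear algebra. By the set-up of the section, an $r$-flat in PG$(m,q)$ is an $(r+1)$-dimensional linear subspace of $V=\mathbb{F}_q^{m+1}$, and PGL$_{m+1}(q)=\mathrm{GL}_{m+1}(q)/Z$, where $Z$ is the subgroup of nonzero scalar matrices. Since a scalar matrix fixes every linear subspace of $V$ setwise, the natural action of $\mathrm{GL}_{m+1}(q)$ on subspaces of $V$ descends to a well-defined action of PGL$_{m+1}(q)$ on projective flats. It therefore suffices to prove that $\mathrm{GL}_{m+1}(q)$ acts transitively on the collection of $(r+1)$-dimensional subspaces of $V$.

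First I would fix two such subspaces $U$ and $U'$. Using the basis-extension (Steinitz exchange) theorem, I would choose a basis $u_0,\dots,u_r$ of $U$ and extend it to an ordered basis $u_0,\dots,u_m$ of $V$, and similarly pick a basis $u'_0,\dots,u'_r$ of $U'$ and extend to a basis $u'_0,\dots,u'_m$ of $V$. Next I would define $g\in\mathrm{GL}_{m+1}(q)$ by $g(u_i)=u'_i$ for $0\le i\le m$; this linear map is invertible because it sends a basis to a basis, and by construction $g(U)=\mathrm{span}\{u'_0,\dots,u'_r\}=U'$. Reducing modulo scalars, the image $\bar g\in$ PGL$_{m+1}(q)$ carries the $r$-flat corresponding to $U$ to the one corresponding to $U'$.

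I do not anticipate a genuine obstacle in this argument: the heart of it is the basis-extension theorem, which holds uniformly in the dimension $r+1$ and therefore yields transitivity on $r$-flats for every $r$ at once. The only mild technicality is verifying that the linear action descends to the projective quotient, but this is immediate from the fact that nonzero scalar multiplication preserves every subspace of $V$, so the kernel of the action of $\mathrm{GL}_{m+1}(q)$ on $r$-flats contains $Z$.
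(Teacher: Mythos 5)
Your argument is correct and complete: the reduction to transitivity of $\mathrm{GL}_{m+1}(q)$ on $(r+1)$-dimensional subspaces via basis extension, together with the observation that scalars act trivially on subspaces so the action descends to PGL$_{m+1}(q)$, is the standard proof of this fact. The paper itself offers no proof, citing the result from Wan's \emph{Geometry of Matrices}, so there is nothing to compare against; your write-up supplies exactly the argument one would expect behind that citation.
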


The set of points $X=(x_0,x_1,\dots,x_m)$ of PG$(m,q)$ which satisfy a quadratic homogeneous equation
\begin{equation}\label{equt1}
\sum_{0\leq i\leq j\leq m}a_{ij}x_ix_j=0,
\end{equation}
where $a_{ij}\in \mathbb{F}_{q}$ and not all $a_{ij}$ are zero, $0\leq i\leq j\leq m$, is called a \emph{quadric} in PG$(m,q)$ and the quadratic equation (\ref{equt1}) is called its \emph{equation}. A quadric is called \emph{non-degenerate} if it is not reducible to a form in fewer than $m+1$ variables by a linear transformation. We use the notation $\mathcal{Q}_m$ to represent a non-degenerate quadric.

\begin{lem}\cite{abc8}\label{mj1}
For $m$ even, the equation of $\mathcal{Q}_m$ is equivalent to
\begin{equation}\label{r0}
\sum_{0\leq i\leq (m-2)/2}x_{2i}x_{2i+1}+x_m^2=0,
\end{equation}
which is called a conic $(m=2)$ or a parabolic quadric $(m>2)$.

For $m$ odd, the equation of $\mathcal{Q}_m$ is equivalent to either
\begin{equation}\label{r1}
\sum_{0\leq i\leq (m-1)/2}x_{2i}x_{2i+1}=0,
\end{equation}
which is called a hyperbolic quadric,
or
\begin{equation}\label{r2}
\sum_{0\leq i\leq (m-3)/2}x_{2i}x_{2i+1}+f(x_{m-1},x_{m})=0,
\end{equation}
which is called an elliptic quadric,
where $f(x_{m-1},x_{m})$ is an irreducible polynomial over $\mathbb{F}_{q}$.
\end{lem}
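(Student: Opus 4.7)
The plan is to invoke the standard classification of non-degenerate quadratic forms over a finite field of odd characteristic and then attach the stated geometric names. First I would polarize the quadratic form $Q(X)=\sum_{i\le j}a_{ij}x_ix_j$ using $B(x,y)=\tfrac{1}{2}(Q(x+y)-Q(x)-Q(y))$, which is well-defined since $q$ is odd, and read off a symmetric matrix $A$ with $Q(x)=x^{T}Ax$; non-degeneracy of $\mathcal{Q}_m$ then translates to $\det A\neq 0$. A Gram--Schmidt orthogonalization for symmetric bilinear forms produces a linear change of coordinates bringing $Q$ to a purely diagonal form $\sum_{i=0}^{m}c_iy_i^{2}$ with every $c_i\in\mathbb{F}_q^{*}$.

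Next I would record two elementary facts valid over $\mathbb{F}_q$ with $q$ odd: (a) every isotropic non-degenerate binary quadratic form is equivalent to the hyperbolic plane $z_0z_1$, proved by completing squares and substituting $u\pm v$; (b) any two anisotropic non-degenerate binary forms are equivalent, because each one represents every element of $\mathbb{F}_q^{*}$ and the level sets can then be matched up under a linear change of variables. From (a), whenever two diagonal coefficients $c_i,c_j$ satisfy $-c_i/c_j\in(\mathbb{F}_q^{*})^{2}$, the summand $c_iy_i^{2}+c_jy_j^{2}$ can be rewritten as $z_{2k}z_{2k+1}$. Iterating yields a decomposition $Q\sim\sum_{k=0}^{h-1}z_{2k}z_{2k+1}\oplus R$ in which $R$ is anisotropic. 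By Chevalley--Warning, any non-degenerate form in at least three variables over $\mathbb{F}_q$ is already isotropic, so $R$ uses $0$, $1$ or $2$ variables.

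The parity of $m+1$ now determines which case occurs. When $m$ is odd and hence $m+1$ is even, $R$ has dimension $0$ or $2$: dimension $0$ gives the hyperbolic normal form (\ref{r1}); dimension $2$ gives (\ref{r2}), where $f(x_{m-1},x_m)$ is anisotropic binary and hence by (b) equivalent to the norm form of $\mathbb{F}_{q^{2}}/\mathbb{F}_q$, i.e.\ an irreducible homogeneous polynomial. When $m$ is even and $m+1$ is odd, $R$ must use exactly one variable, producing $\sum_{i}x_{2i}x_{2i+1}+cx_m^{2}$ with $c\in\mathbb{F}_q^{*}$; multiplying the whole equation by $c^{-1}$ does not change the quadric, and absorbing the resulting scalar into one factor of the first hyperbolic pair (for instance by substituting $x_1\mapsto cx_1$) restores exactly the form (\ref{r0}).

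The main obstacle is verifying fact (b), i.e.\ that all anisotropic non-degenerate binary quadratic forms over $\mathbb{F}_q$ lie in a single $\mathrm{GL}_2(\mathbb{F}_q)$-orbit; this is what guarantees that the elliptic case produces one equivalence class rather than a family depending on the choice of irreducible $f$, and it is the step that genuinely uses finiteness of the field. Everything else in the argument is routine linear algebra together with a careful bookkeeping of coefficients up to squares in $\mathbb{F}_q^{*}/(\mathbb{F}_q^{*})^{2}$.
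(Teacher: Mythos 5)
The paper offers no proof of this lemma: it is quoted directly from Hirschfeld and Thas \cite{abc8}, so there is no internal argument to compare yours against. Taken on its own terms, your proposal is the standard Witt-style classification of non-degenerate quadratic forms over $\mathbb{F}_q$ with $q$ odd, and its architecture --- polarize, diagonalize, split off hyperbolic planes until an anisotropic remainder is reached, bound that remainder by Chevalley--Warning, and identify the two-dimensional anisotropic form with the norm form of $\mathbb{F}_{q^2}/\mathbb{F}_q$ --- is the right one. Your treatment of the parabolic case (multiplying the equation by $c^{-1}$, which is a projective but not a cogredience equivalence, then rescaling coordinates) is exactly what reconciles the single equation (\ref{r0}) with the two cogredience classes ``$1$ or $z$'' in matrix (\ref{h0}); note only that after multiplying through by $c^{-1}$ every hyperbolic summand acquires that factor, so you must rescale one coordinate in each pair, not just the first.

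One step fails as literally written: the iteration producing the hyperbolic part. You propose to pair diagonal coefficients $c_i,c_j$ with $-c_i/c_j$ a square and convert each such pair into $z_{2k}z_{2k+1}$, iterating until the remainder is anisotropic. This can stall while the remaining form is still isotropic: over $\mathbb{F}_3$ the form $y_0^2+y_1^2+y_2^2$ is non-degenerate and isotropic (take $(1,1,1)$), yet $-c_i/c_j=-1$ is a non-square there, so no pair of diagonal terms is hyperbolic. The correct mechanism is the standard splitting lemma: if a non-degenerate form admits an isotropic vector $v$, choose $w$ with $B(v,w)\neq 0$; then $v,w$ span a hyperbolic plane which splits off orthogonally, and one re-diagonalizes the complement and repeats. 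Chevalley--Warning then does exactly the job you assign it, supplying the isotropic vector whenever at least three variables remain. Separately, your fact (b) needs one more line: every non-degenerate binary form $ax^2+by^2$ over $\mathbb{F}_q$ represents every $c\in\mathbb{F}_q^{*}$ (the sets $\{ax^2\}$ and $\{c-by^2\}$ each have $(q+1)/2$ elements, hence meet), so both anisotropic forms are equivalent to $y_0^2+dy_1^2$ with $-d$ a non-square, forcing their discriminants into the same square class; ``matching up level sets'' is not by itself a linear equivalence. With these two repairs the argument is complete.
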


A \emph{generator} of a quadric $\mathcal{Q}_m$ is a subspace of maximum dimension in $\mathcal{Q}_m$.
In this paper, we use the generators of $\mathcal{Q}_m$ to construct generalized partial geometries.
\begin{lem}\cite{abc8}\label{w1}
The dimension of a generator of $\mathcal{Q}_m$ is $\frac{1}{2}(m-3+\omega)$, where
$$\omega=\left\{\begin{array}{ll}1,&m~\rm{is~even};\cr 2&\mathcal{Q}_m~\rm{is~hyperbolic};\cr 0&\mathcal{Q}_m~\rm{is~elliptic}. \end{array}\right.$$
\end{lem}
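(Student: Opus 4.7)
The plan is to prove the two inequalities by separately exhibiting a generator of the claimed projective dimension $d(m)=(m-3+\omega)/2$ and then using induction on $m$ to show that no subspace of $\mathcal{Q}_m$ can have larger dimension. The reduction step is standard: given a point $P\in \mathcal{Q}_m$, every subspace of $\mathcal{Q}_m$ through $P$ sits inside the tangent hyperplane $T_P$ (the polar of $P$ with respect to the bilinear form associated to the quadric), and the image of $\mathcal{Q}_m\cap T_P$ in the quotient $T_P/\langle P\rangle\cong$ PG$(m-2,q)$ is a non-degenerate quadric of the same type as $\mathcal{Q}_m$.

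For the lower bound I would exhibit explicit generators from the normal forms of Lemma \ref{mj1}. In the hyperbolic case (\ref{r1}), the subspace cut out by $x_1=x_3=\cdots=x_m=0$ is spanned by $e_0,e_2,\dots,e_{m-1}$ and clearly lies on $\mathcal{Q}_m$, giving projective dimension $(m-1)/2$. In the parabolic case (\ref{r0}) with $m$ even, the subspace $x_1=x_3=\cdots=x_{m-1}=x_m=0$ lies on $\mathcal{Q}_m$ and has projective dimension $(m-2)/2$. In the elliptic case (\ref{r2}), the irreducibility of $f(x_{m-1},x_m)$ over $\mathbb{F}_q$ forces any solution of $f=0$ in $\mathbb{F}_q$ to have $x_{m-1}=x_m=0$, so every generator lies in the hyperplane $x_{m-1}=x_m=0$; within that hyperplane $x_1=x_3=\cdots=x_{m-2}=0$ gives a totally isotropic subspace of projective dimension $(m-3)/2$.

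For the upper bound I would argue by induction on $m$. The base cases are handled directly from Lemma \ref{mj1}: a conic in PG$(2,q)$ contains no line (three points on a line would force a linear factor), a hyperbolic $\mathcal{Q}_3$ contains lines but no plane (by a rank argument on the bilinear form), and an elliptic $\mathcal{Q}_3$ contains no line (since $f$ is irreducible). For the inductive step, let $U\subseteq \mathcal{Q}_m$ be a generator, pick $P\in U$, and observe that for every $Q\in U$ the line $PQ$ lies in $\mathcal{Q}_m$, which forces $Q\in T_P$; hence $U\subseteq T_P$. Passing to $T_P/\langle P\rangle$ yields a quadric $\mathcal{Q}_{m-2}'$ of the same type (parabolic stays parabolic; for the odd case one checks that the Witt index drops by exactly one, so hyperbolic stays hyperbolic and elliptic stays elliptic), and the image of $U$ is a generator of $\mathcal{Q}_{m-2}'$. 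By the induction hypothesis $\dim(U/\langle P\rangle)\le d(m-2)=d(m)-1$, so $\dim U\le d(m)$.

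The main obstacle is the claim that the type of the quadric is preserved under the tangent-hyperplane reduction, i.e.\ that the value of $\omega$ is the same for $\mathcal{Q}_{m-2}'$ as for $\mathcal{Q}_m$. This is where a careful analysis of the bilinear form restricted to $T_P$ modulo $\langle P\rangle$ is needed; in characteristic $\ne 2$ one can diagonalise and read off the discriminant, or equivalently use the canonical forms of Lemma \ref{mj1} by choosing coordinates with $P=e_0$, so that after projecting to the quotient the quadratic form visibly loses the hyperbolic summand $x_0x_1$ while retaining the remaining terms, hence the parabolic/hyperbolic/elliptic character is unchanged. Once this is established the induction closes cleanly and the two bounds match.
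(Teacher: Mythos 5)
The paper does not prove Lemma \ref{w1}: it is quoted from Hirschfeld and Thas \cite{abc8} without proof, so there is no internal argument to compare yours against. Judged on its own, your proof follows the standard route (exhibit an explicit totally singular subspace of the right dimension from the normal forms of Lemma \ref{mj1}, then bound the dimension from above by induction via the tangent-hyperplane quotient $T_P/\langle P\rangle$), and the skeleton is sound: the reduction $U\subseteq T_P$, the preservation of type of the quotient form in odd characteristic, the base cases $m=2,3$, and the dimension count $d(m-2)=d(m)-1$ all check out.

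One assertion in your lower-bound paragraph is false, although it is not load-bearing. In the elliptic case you claim that ``every generator lies in the hyperplane $x_{m-1}=x_m=0$'' because $f$ is anisotropic. First, that locus is a codimension-$2$ subspace, not a hyperplane; second, the inference fails: a point of $\mathcal{Q}_m$ satisfies $\sum x_{2i}x_{2i+1}+f(x_{m-1},x_m)=0$, not $f(x_{m-1},x_m)=0$, so points --- and even generators --- of the quadric need not have $x_{m-1}=x_m=0$. Concretely, for $m=5$ the line spanned by $e_0$ and $(0,0,1,-f(a,b),a,b)$ with $(a,b)\neq(0,0)$ lies entirely on the elliptic quadric $x_0x_1+x_2x_3+f(x_4,x_5)=0$ and meets $x_4=x_5=0$ in only one point. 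Fortunately your argument only needs the weaker, true statement that $x_1=x_3=\cdots=x_{m-2}=x_{m-1}=x_m=0$ is itself totally singular of projective dimension $(m-3)/2$, and the upper bound comes from the induction rather than from this containment, so the proof survives once the false clause is deleted. A second, cosmetic point: in the inductive step you should phrase the hypothesis as a bound on all totally singular subspaces, since the image of $U$ in $T_P/\langle P\rangle$ is not known a priori to be a generator of $\mathcal{Q}'_{m-2}$; that is in any case how your induction is actually being used.
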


When $q$ is odd, we can assume that in Equation (\ref{equt1}) of a quadric the summation range is $0\leq i, j\leq m$ and $a_{ij}=a_{ji}(0\leq i, j\leq m)$.
So we could represent a quadric with the corresponding coefficient matrix $A=(a_{ij})_{0\leq i, j\leq m}$, which is an $(m+1)\times(m+1)$ symmetric matrix.
Note that $A$ and $kA,k\in \mathbb{F}_{q}^*$, represent the same quadric.
A quadric is non-degenerate if and only if its corresponding coefficient matrix is non-degenerate.

Let $A$ and $B$ be $(m+1)\times(m+1)$ matrices over $\mathbb{F}_{q}$. If there is an $(m+1)\times(m+1)$ non-degenerate matrix $Q$ over $\mathbb{F}_{q}$ such that $QAQ^T=B$, we say that $A$ is \emph{cogredient} to $B$. Clearly, the matrices cogredient to a symmetric matrix are also symmetric matrices.

From Reference \cite{wan}, when $q$ is odd, the corresponding symmetric matrix $A$ of $\mathcal{Q}_m$ is cogredient to
\begin{equation}\label{h0}
\left(\begin{array}{ccc}0&I_{\nu}&~\cr I_{\nu}&0&~\cr ~&~&1({\rm or}~z)\end{array}\right),~{\rm when}~m=2\nu,
\end{equation}
or
\begin{equation}\label{h1}
\left(\begin{array}{cc}0&I_{\nu}\cr I_{\nu}&0\end{array}\right),~{\rm when}~m+1=2\nu,
\end{equation}
or
\begin{equation}\label{h2}
\left(\begin{array}{cccc}0&I_{\nu}&~&~\cr I_{\nu}&0&~&~\cr~&~&1&~\cr~&~&~&-z\end{array}\right),~{\rm when}~m-1=2\nu,
\end{equation}
where $I_{\nu}$ is the identity matrix of order $\nu$, $z$ is a non-square element of $\mathbb{F}_{q}^*$, and matrices (\ref{h0}), (\ref{h1}) and (\ref{h2}) are corresponding to Equations (\ref{r0}), (\ref{r1}) and (\ref{r2}) respectively.

\subsection{SRPGs from conics in PG$(2,q)$}
In this section, we use conics in PG$(2,q)$ to construct strongly regular generalized partial geometries.
From Lemma \ref{w1}, the generator of a conic is a point.
Let $V$ be the set of all points in PG$(2,q)$, then $|V|=q^{2}+q+1$. For convenience, we denote a point in PG$(2,q)$ by the vector whose first nonzero position is $1$ in a $0$-subspace hereinafter.

When $q$ is an odd prime power, the equation of a conic  in PG$(2, q)$ can be represented by
$$XAX^{T}=0,$$
where $X=(x,y,z)$ is a point, $A\in \mathbb{F}_q^{3\times 3},~A=A^{T}$ and $|A|\not=0$. So we could represent a conic with the corresponding non-degenerate symmetric matrix.
From Lemma \ref{mj1}, the equation of a conic can be carried  by  a  projective  transformation into the normal form
$xy+z^2=0$ in PG$(2, q)$.

\begin{lem}\cite{abc8,wan}\label{five}
In PG$(2,q)$,

$(1)$ there are $q+1$ points in a conic and any three of them are non-collinear;

$(2)$ through any five points, no three of which are collinear, passes a unique conic, where $q\geq 4$;

$(3)$ the number of conics is $q^2(q^3-1).$
\end{lem}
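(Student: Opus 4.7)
The plan is to treat the three parts separately, always reducing to the normal form $\mathcal{C}:xy+z^2=0$ of Lemma \ref{mj1}.

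For (1), I enumerate the points of PG$(2,q)$ via the representatives $(1,y,z)$, $(0,1,z)$, $(0,0,1)$ and substitute into $xy+z^2=0$: the first family contributes $q$ solutions (one per $z$, with $y=-z^2$), the second only $(0,1,0)$, and the third none, totalling $q+1$. For non-collinearity, I parametrise any line through two points of $\mathcal{C}$ as $\{\lambda P_1+\mu P_2\}$ and observe that $xy+z^2$ becomes a binary quadratic form in $(\lambda,\mu)$, hence has at most two projective zeros.

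For (2), I work in the $5$-dimensional projective parameter space $\mathbb{P}^5$ of symmetric $3\times 3$ matrices modulo scalars, noting that each condition ``the conic passes through a fixed point'' is one linear equation on the matrix entries. By the standard transitivity of PGL$_3(q)$ on ordered projective frames, I normalise four of the five points to $(1,0,0)$, $(0,1,0)$, $(0,0,1)$, $(1,1,1)$, forcing $a_{00}=a_{11}=a_{22}=0$ and $a_{01}+a_{02}+a_{12}=0$. The no-three-collinear hypothesis forces the fifth point $(p,q,r)$ to have nonzero, pairwise distinct coordinates, so that its equation $pq\,a_{01}+pr\,a_{02}+qr\,a_{12}=0$ is linearly independent of $a_{01}+a_{02}+a_{12}=0$. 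Consequently the coefficient matrix has rank $5$, pinning down $(a_{01}:a_{02}:a_{12})$ uniquely. Non-degeneracy of the resulting matrix follows from the same hypothesis: if any $a_{ij}$ vanished the conic would split into two lines, one of which would contain three of the five points.

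For (3), I apply orbit-stabiliser. By the cogredience reduction of Equation (\ref{h0}), PGL$_3(q)$ acts transitively on non-degenerate conics. The stabiliser of $\mathcal{C}$ is isomorphic to PGL$_2(q)$ via the $\mathrm{PG}(1,q)$-parametrisation of $\mathcal{C}$, of order $q(q^2-1)$; since $|\mathrm{PGL}_3(q)|=q^3(q^2-1)(q^3-1)$, the quotient gives $q^2(q^3-1)$.

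The most delicate step is tracking how the no-three-collinear hypothesis enters (2): both the rank claim and the non-degeneracy of the solution matrix require the fifth point to avoid the six lines determined by pairs of the four normalised points, which in turn forces its coordinates to be nonzero and pairwise distinct; without this, either the coefficient matrix drops rank or the unique projective solution degenerates into a pair of lines and the conic we seek fails to exist.
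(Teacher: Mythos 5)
The paper offers no proof of this lemma at all: it is quoted verbatim from Hirschfeld--Thas \cite{abc8} and Wan \cite{wan}, so there is no in-paper argument to compare against, and your submission is a genuinely self-contained reconstruction. It is essentially correct and follows the standard textbook lines: the normal-form point count plus the two-point line intersection for (1), the linear system in the $\mathbb{P}^5$ of symmetric matrices after frame normalisation for (2), and orbit--stabiliser for (3). Your handling of (2) is the strongest part: the determinant of $\bigl(\begin{smallmatrix}0&a&b\\ a&0&c\\ b&c&0\end{smallmatrix}\bigr)$ equals $2abc$ (recall $q$ is odd throughout the paper), which vindicates your non-degeneracy criterion, and the linear independence of $(pq,pr,qr)$ from $(1,1,1)$ is exactly the pairwise-distinctness and nonvanishing of coordinates that the no-three-collinear hypothesis supplies; the hypothesis $q\geq 4$ enters only in that no $5$-arc exists for smaller $q$, so the statement is vacuous there. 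Two small repairs are needed. In (1), the assertion that a binary quadratic form has at most two projective zeros presupposes the form is not identically zero; you should add that no line is contained in a non-degenerate conic (its generators are points by Lemma \ref{w1}, or simply because a totally singular line would force the rank of the form below $3$), since an identically vanishing restriction is precisely the failure mode you are excluding. In (3), Equation (\ref{h0}) gives \emph{two} cogredience classes of non-degenerate symmetric $3\times 3$ matrices (corner entry $1$ or $z$), so transitivity of PGL$_3(q)$ on conics is not immediate from it; you need the extra observation that the rescaling $A\mapsto zA$, which defines the same conic, interchanges the two classes because $\det(zA)=z^{3}\det A$ changes the square class of the determinant. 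You also take on faith that the setwise stabiliser of a conic is exactly PGL$_2(q)$ of order $q(q^2-1)$, which is classical but could be flagged as such. With these remarks supplied the proof is complete.
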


For any point $P$ in PG$(2,q)$, which is different from $e_1=(1,0,0),e_2=(0,1,0),e_3=(0,0,1)$, we call that $P$ is of type $I$, if any three distinct points in the set $\{P,e_1,e_2,e_3\}$  are not collinear, type $II$ otherwise.
Let $\mathcal{P}_1$ be the set of points of type $I$
and $\mathcal{C}_1$ be the set of all the conics containing $e_1,e_2,e_3$. Then $\mathcal{P}_1=\{(1,x,y): x,y\in\mathbb{F}_q^*\}$ and the corresponding symmetric matric $A$ of the conic in $\mathcal{C}_1$ has the form
\begin{equation}\label{equa}
A=\left(\begin{array}{ccc}0&a&b\cr a&0&1\cr b&1&0\end{array}\right),
\end{equation}
where $a,b\in\mathbb{F}_q^*$.
Then $|\mathcal{C}_1|=(q-1)^{2}$.
Let the incident matrix of $(\mathcal{P}_1, \mathcal{C}_1)$ be $M_1$. Then $M_{1}$ is a $(q-1)^{2}\times (q-1)^{2}$ matrix over  $\mathbb{F}_2$.

\begin{thm}\label{m23}
For the incidence matrix $M_1$, we have

$(1)$ the number of $1'$s in each column is $q-2;$

$(2)$ the number of $1'$s in each row is $q-2.$
\end{thm}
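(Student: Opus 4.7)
\medskip
\noindent\textbf{Proof plan.}
The two parts are essentially independent counting exercises, and I would handle them as follows.

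For part (1), the idea is that the ``missing'' points on a conic in $\mathcal{C}_1$ must all be of type $I$ for geometric reasons. By Lemma~\ref{five}(1), any conic contains $q+1$ points and no three of them are collinear. Any conic $C\in\mathcal{C}_1$ contains $e_1,e_2,e_3$ by construction, so exactly $q-2$ further points of PG$(2,q)$ lie on $C$. I would observe that the type $II$ points (by definition) lie on one of the three lines $e_ie_j$; but if any of the remaining $q-2$ points of $C$ lay on such a line, it would produce three collinear points on $C$ together with the relevant $e_i,e_j$, contradicting Lemma~\ref{five}(1). Hence each of the remaining $q-2$ points lies in $\mathcal{P}_1$, which gives exactly $q-2$ ones in each column of $M_1$.

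For part (2), I would fix a type $I$ point $P=(1,x,y)$ with $x,y\in\mathbb{F}_q^*$ and count how many matrices $A$ of the normalised form (\ref{equa}) satisfy $PAP^T=0$. A direct computation
\begin{equation*}
PAP^{T}=(1,x,y)\begin{pmatrix}ax+by\\ a+y\\ b+x\end{pmatrix}=2(ax+by+xy)
\end{equation*}
shows (using that $q$ is odd) that the incidence condition reduces to the single linear equation $ax+by=-xy$ in $(a,b)\in\mathbb{F}_q\times\mathbb{F}_q$. This equation has exactly $q$ solutions. The two solutions with $a=0$ (forcing $b=-x\ne 0$) and with $b=0$ (forcing $a=-y\ne 0$) must be discarded, since membership in $\mathcal{C}_1$ requires $a,b\in\mathbb{F}_q^*$; note that $a=b=0$ cannot occur because $xy\ne 0$. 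This leaves exactly $q-2$ admissible pairs $(a,b)$, hence $q-2$ conics in $\mathcal{C}_1$ through $P$, and so $q-2$ ones in each row of $M_1$.

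Neither step looks genuinely hard: part (1) is a geometric remark and part (2) is a one-line linear algebra count. The only point that needs care is the parameterisation of $\mathcal{C}_1$ by $(a,b)\in(\mathbb{F}_q^*)^2$, i.e.\ confirming that normalising the $(2,3)$-entry of $A$ to $1$ is legitimate (any admissible coefficient matrix has nonzero off-diagonal entries because $|A|=2ab\cdot 1\ne 0$) so that different $(a,b)$ give different conics. With that in hand, the theorem follows directly from the two counts above.
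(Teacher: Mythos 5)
Your proof is correct, but both halves take a genuinely different route from the paper's. For part (1) the paper computes directly: writing $P=(1,x,y)$ with $x,y\in\mathbb{F}_q^*$, the condition $PAP^{T}=0$ becomes $2ax+2by+2xy=0$, i.e.\ $y=-ax(b+x)^{-1}$ with $x\neq -b$, which yields $q-2$ points of $\mathcal{P}_1$ on the conic; you instead deduce the same count synthetically from Lemma~\ref{five}(1): the conic has $q+1$ points, no three collinear, so none of the $q-2$ points other than $e_1,e_2,e_3$ can lie on a coordinate line, hence all of them are of type $I$. For part (2) the roles are reversed: the paper invokes the $4$-transitivity of PGL$_3(q)$ on quadruples of points in general position to conclude that all row sums are equal, and then (implicitly) double-counts incidences using $|\mathcal{P}_1|=|\mathcal{C}_1|=(q-1)^2$ to get $q-2$; you instead count directly the solutions $(a,b)\in(\mathbb{F}_q^*)^2$ of the linear equation $ax+by=-xy$, correctly discarding the one solution with $a=0$ and the one with $b=0$. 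Both versions are sound. Your part (1) is arguably cleaner, since it avoids parameterising the points of the conic; your part (2) is more self-contained than the paper's, which leans on a transitivity claim and an unstated double count, though it does require the observation you make that $\mathcal{C}_1$ is faithfully parameterised by $(a,b)\in(\mathbb{F}_q^*)^2$ (legitimate because $|A|=2ab\neq 0$ forces all off-diagonal entries nonzero, so the $(2,3)$-entry can be normalised to $1$).
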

\begin{proof}
Let $\mathcal{O}$ be a conic in $\mathcal{C}_1$ and its corresponding symmetric matric $A$ be the same as Equation (\ref{equa}).
Set $P=(1,x,y)$, $x,y\in\mathbb{F}_q^*$. If $P\in\mathcal{O}$, there is
$$\left(\begin{array}{ccc}1&x&y\end{array}\right)\left(\begin{array}{ccc}0&a&b\cr a&0&1\cr b&1&0\end{array}\right)\left(\begin{array}{c}1\cr x\cr y\end{array}\right)=2ax+2by+2xy=0,$$
then $y=-ax(b+x)^{-1}$ and $x\neq -b$ as $A$ is a non-degenerate symmetric matrix.
Thus, the number of points in $\mathcal{O}$ is $q-2$, that is (1).

It is well known that the group PGL$_{3}(q)$  is $4$-transitive on the set of points in which any three points are non-collinear, so the number of $1'$s in each row is the same, which is also $q-2$.
\end{proof}

Especially, when $q=3$, $|\mathcal{P}_1|=|\mathcal{C}_1|=4$ and $(\mathcal{P}_1,\mathcal{C}_1)$ only has four isolated points. Then it is an SRPG$(0,0;0;0,0)$. Next we discuss the case of $q\geq5$.

\begin{thm}\label{ld11}
For $q\geq5$, the point graph of $(\mathcal{P}_1, \mathcal{C}_1)$ is a strongly regular graph with parameters
$$v=(q-1)^2,~k=(q-2)(q-3),~\lambda=(q-4)^2+1,~\mu=(q-3)(q-4).$$

\end{thm}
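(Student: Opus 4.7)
The plan is to verify $v$ and $k$ by direct counting, then establish strong regularity by reducing to canonical choices of the pair $(P_1,P_2)$ and counting common neighbours by inclusion--exclusion. The value $v=(q-1)^2$ is immediate from the definition of $\mathcal{P}_1$, and the degree $k=(q-2)(q-3)$ follows once adjacency is translated into coordinates: by Lemma \ref{five}(2), two points $P_i=(1,x_i,y_i)\in\mathcal{P}_1$ lie in a common conic of $\mathcal{C}_1$ iff no three of $\{e_1,e_2,e_3,P_1,P_2\}$ are collinear, which amounts to $x_1\neq x_2$, $y_1\neq y_2$, and $x_1y_2\neq x_2y_1$. Hence the non-neighbours of a fixed point lie on exactly three lines through it (one toward each $e_i$), giving $k=(q-1)^2-1-3(q-2)=(q-2)(q-3)$.

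Next, since the diagonal subgroup of PGL$_{3}(q)$ stabilises $\{e_1,e_2,e_3\}$ and acts transitively on $\mathcal{P}_1$, I may take $P_1=(1,1,1)$ throughout. For $\lambda$, let $P_2=(1,a,b)$ be adjacent to $P_1$, so $a,b\in\mathbb{F}_q^*\setminus\{1\}$ and $a\neq b$. A common neighbour $(1,x,y)$ corresponds to $(x,y)\in(\mathbb{F}_q^*)^2$ avoiding the six affine lines $x=1$, $y=1$, $y=x$ (the pencil through $P_1$) and $x=a$, $y=b$, $ay=bx$ (the pencil through $P_2$). The three lines of each pencil meet only at its centre, so by inclusion--exclusion each pencil covers $3(q-1)-3+1=3q-5$ points of $(\mathbb{F}_q^*)^2$. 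A direct inspection shows the nine cross-pair intersections consist of three empty pairs (the parallel verticals, the parallel horizontals, and the two diagonals, which meet only at the excluded origin) and six single points, and that under the hypotheses $a,b\in\mathbb{F}_q^*\setminus\{1\}$, $a\neq b$ these six points are pairwise distinct and disjoint from $\{P_1,P_2\}$. A final inclusion--exclusion on the two pencils then yields $\lambda=(q-1)^2-2(3q-5)+6=(q-4)^2+1$.

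For $\mu$, the subgroup $S_3$ of PGL$_{3}(q)$ permuting $\{e_1,e_2,e_3\}$ stabilises $(\mathcal{P}_1,\mathcal{C}_1)$ and cycles through the three types of non-adjacency ($x_1=x_2$, $y_1=y_2$, $x_1y_2=x_2y_1$), so I may assume $P_2=(1,1,b)$ with $b\in\mathbb{F}_q^*\setminus\{1\}$. A common neighbour $(1,x,y)$ then satisfies $x\neq 1$ and $y\notin\{1,b,x,bx\}$; for generic $x\notin\{b,1/b\}$ the four forbidden $y$-values are distinct, giving $q-5$ admissible $y$, while the two special values $x=b$ and $x=1/b$ each contribute $q-4$. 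The sub-case $b=-1$ collapses $b$ and $1/b$ into one, but a short check shows the totals still agree at $(q-3)(q-4)$. The main bookkeeping obstacles are (i) confirming the six cross-pencil intersection points in the $\lambda$ computation are genuinely distinct and disjoint from $\{P_1,P_2\}$, and (ii) the small case split at $b=-1$ in the $\mu$ count; both are routine but finicky, and the identity $k(k-\lambda-1)=(v-k-1)\mu$ supplies a clean consistency check.
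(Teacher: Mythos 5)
Your proposal is correct, and its overall setup is the same as the paper's: both normalize $P_1=(1,1,1)$ via the stabilizer of $\{e_1,e_2,e_3\}$, translate adjacency of $(1,x_1,y_1)$ and $(1,x_2,y_2)$ into the three conditions $x_1\neq x_2$, $y_1\neq y_2$, $x_1y_2\neq x_2y_1$, and then count common neighbours directly. Where you differ is in the bookkeeping. The paper computes $\lambda$ and $\mu$ by exhaustive case tables: for $\lambda$ it splits the adjacent $Q$ into sub-cases (e.g.\ $Q=(1,y^2,y)$ versus generic) according to which forbidden values of $x'$ and $y'$ coincide, and for $\mu$ it treats all three non-adjacency types $(1,1,x)$, $(1,x,1)$, $(1,x,x)$ separately, each with its own sub-cases. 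Your inclusion--exclusion over the two pencils of three lines handles the $\lambda$ count uniformly (the verification that the six cross-pencil intersection points are distinct replaces the paper's sub-case split, and is easily checked from $a,b\neq 1$, $a\neq b$), and your use of the $S_3$ coordinate symmetry collapses the three non-adjacency types to the single case $P_2=(1,1,b)$, leaving only the $b=-1$ degeneration to check. Both arguments are complete; yours is shorter and less error-prone, while the paper's tables make every residual case explicit. Your closing consistency check $k(k-\lambda-1)=(v-k-1)\mu$ indeed holds: $k-\lambda-1=3(q-4)$ and $v-k-1=3(q-2)$.
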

\begin{proof}
Denote the point graph of $(\mathcal{P}_1, \mathcal{C}_1)$ by $G_1$, then $v=|\mathcal{P}_1|=(q-1)^2$. For any two points $P,Q\in\mathcal{P}_1$, $P\sim Q$ if and only if any three of $e_1,e_2,e_3,P,Q$ are non-collinear.
For any point $P\in\mathcal{P}_1$, there are $q-2$ conics containing $P$ in $\mathcal{C}_1$, and any two of them have a unique common point $P$. So there are $(q-2)(q-3)$ points adjacent to $P$ in $G_1$, that is $k=(q-2)(q-3)$.
For any two distinct points $P,Q\in\mathcal{P}_1$, without loss of generality, we can set $P=(1,1,1)$.

If $P\sim Q$, then $Q=(1,x,y)$, where $x\neq 1,y\neq 1,x$ and $x,y\in \mathbb{F}_q^*$.
Suppose $R=(1,x',y')$ is the common neighbour of $P,Q$.
By  careful  calculations, we know that $R$ is the common neighbour of $P,Q$ if and only if $x'\neq1,x,~y'\neq1,y,x',x^{-1}x'y$ and $x,y\in\mathbb{F}_q^*$.
Then we obtain a table as following according to the cases of $y'$.\vspace{0cm}

\begin{center}
\scalebox{0.9}{
\begin{tabular}{|c|c|c|c|c|}
\hline
$Q$ & $R$  & Conditions& Number& $\lambda$\\ \hline
                            &  $(1,y,y')$ & $y'\neq1,y$ & $q-3$ &\\ \cline{2-4}
$(1,y^2,y),y\neq \pm1$      &  $(1,x',y'),$ & $x'\neq1,y,y^2$; & \multirow{2}{*}{$(q-4)(q-5)$} &$(q-4)^2+1$\\
                            &  $x'\neq y$ & $y'\neq1,y,x',y^{-1}x'$ &  &\\ \hline
                            & $(1,y^{-1}x,y')$ & $y'\neq1,y,y^{-1}x$ & $q-4$ & \\ \cline{2-4}
\multirow{2}{*}{$(1,x,y),x\neq1,y,y^2;y\neq1$} & $(1,y,y')$ & $y'\neq1,y,x^{-1}y^2$ & $q-4$ & \multirow{2}{*}{$(q-4)^2+1$}  \\ \cline{2-4}
                            & $(1,x',y'),$ & $x'\neq1,x,y^{-1}x,y;$ & \multirow{2}{*}{$(q-5)^2$}& \\
                            & $x'\neq y^{-1}x,y$ & $y'\neq1,y,x',x^{-1}x'y$ &  & \\ \hline
\end{tabular}}
\end{center}
Therefore, when $P\sim Q$, the number of the common neighbours of $P,Q$ is a constant, that is $\lambda=(q-4)^2+1.$

If $P\nsim Q$, then $Q=(1,1,x),(1,x,1)$ or $(1,x,x)$, where $x\neq1$ and $x\in \mathbb{F}_q^*$. Suppose $R=(1,x',y')$ is the common neighbour of $P,Q$.  By  careful  calculations, we obtain  a table as following according to the cases of $y'$.\\ \vspace{-0.5cm}

\begin{center}
\scalebox{0.79}{
\begin{tabular}{|c|c|c|c|c|}
\hline
$Q$ & $R$  & Conditions& Number& $\mu$\\ \hline
\multirow{2}{*}{$(1,1,-1)$}  &  $(1,-1,y')$ & $y'\neq\pm1$ & $q-3$ &  \multirow{2}{*}{$(q-3)(q-4)$}\\ \cline{2-4}
            &  $(1,x',y'),~x'\neq \pm1$ & $x'\neq\pm1;~y'\neq\pm1,\pm x'$ & $(q-3)(q-5)$ &\\ \hline

                       & $(1,x^{-1},y')$ & $y'\neq1,x,x^{-1}$ & $q-4$ & \\ \cline{2-4}
$(1,1,x),x\neq\pm1$    & $(1,x,y')$ & $y'\neq1,x,x^{2}$ & $q-4$ & $(q-3)(q-4)$\\ \cline{2-4}
                       & $(1,x',y'),~x'\neq x,x^{-1}$ & $x'\neq1,x,x^{-1};~y'\neq1,x,x',xx'$ & $(q-4)(q-5)$ & \\  \hline
\multirow{2}{*}{$(1,-1,1)$}  &  $(1,x',-1)$ & $x'\neq\pm1$ & $q-3$ &  \multirow{2}{*}{$(q-3)(q-4)$}\\ \cline{2-4}
            &  $(1,x',y'),~y'\neq \pm1$ & $y'\neq\pm1;~x'\neq\pm1,\pm y'$ & $(q-3)(q-5)$ &\\ \hline

                       & $(1,x',x^{-1})$ & $x'\neq1,x,x^{-1}$ & $q-4$ & \\ \cline{2-4}
$(1,x,1),x\neq\pm1$    & $(1,x',x)$ & $x'\neq1,x,x^{2}$ & $q-4$ & $(q-3)(q-4)$\\ \cline{2-4}
                       & $(1,x',y'),~y'\neq x,x^{-1}$ & $y'\neq1,x,x^{-1};~x'\neq1,x,y',xy'$ & $(q-4)(q-5)$ & \\  \hline
$(1,x,x),x\neq1$    & $(1,x',y')$ & $x'\neq1,x;~y'\neq 1,x,x'$ & $(q-3)(q-4)$ & $(q-3)(q-4)$\\ \hline
\end{tabular}}
\end{center}
Therefore, when $P\nsim Q$, the number of the common neighbours of $P,Q$ is also a constant, that is $\mu=(q-3)(q-4)$.

To sum up, the point graph of $(\mathcal{P}_1, \mathcal{C}_1)$ is a strongly regular graph.
\end{proof}

\begin{thm}\label{partgeo}
For $q\geq5$, $(\mathcal{P}_1, \mathcal{C}_1)$ is a PG$(q-3,q-3;q-5,q-4,q-3)$.
\end{thm}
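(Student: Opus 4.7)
The plan is to verify axioms (i)--(v) of a generalized partial geometry with $s = t = q - 3$ and $\{\alpha_1, \alpha_2, \alpha_3\} = \{q-5, q-4, q-3\}$. Conditions (ii) and (iii)---each block contains $q-2$ points, each point lies on $q-2$ blocks---are precisely Theorem \ref{m23}. For condition (i), if a conic $\mathcal{O} \in \mathcal{C}_1$ contained two distinct points $P, Q \in \mathcal{P}_1$, it would also contain $e_1, e_2, e_3$, giving five points of $\mathcal{O}$ no three of which are collinear by Lemma \ref{five}(1); Lemma \ref{five}(2) (applicable since $q \geq 5$) then forces $\mathcal{O}$ to be unique.

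For condition (iv), fix $P = (1, x_0, y_0) \in \mathcal{P}_1$ (so $x_0, y_0 \in \mathbb{F}_q^*$) and a conic $\mathcal{O} \in \mathcal{C}_1$ with matrix $A$ of the form (\ref{equa}) having parameters $a, b \in \mathbb{F}_q^*$, with $P \notin \mathcal{O}$. A direct check (following the computation in the proof of Theorem \ref{m23}) confirms that every point of $\mathcal{O}$ other than $e_1, e_2, e_3$ has all coordinates nonzero, so $|\mathcal{O} \cap \mathcal{P}_1| = q - 2$. Two points $P, Q \in \mathcal{P}_1$ are joined in the point graph iff some conic of $\mathcal{C}_1$ contains both, which by Lemmas \ref{five}(1)--(2) is equivalent to no three of $\{e_1, e_2, e_3, P, Q\}$ being collinear. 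Since $P$ and $Q$ are both of type $I$, the only potential collinearity is $\{P, Q, e_i\}$ for some $i \in \{1,2,3\}$; equivalently, $Q$ lies on the line $L_i$ through $P$ and $e_i$. Each $L_i$ contains $e_i \in \mathcal{O}$ and meets $\mathcal{O}$ in at most two points. Because the line through $e_i$ and $e_j$ does not pass through $P$, we have $e_j \notin L_i$ for $j \neq i$, so the second intersection point of $L_i$ with $\mathcal{O}$ (if it exists) belongs to $\mathcal{P}_1$; and since distinct $L_i$'s intersect only at $P \notin \mathcal{O}$, these extra intersection points are distinct across $i$. Hence the number of points of $\mathcal{O} \cap \mathcal{P}_1$ that are not joined to $P$ equals the number $n \in \{0, 1, 2, 3\}$ of indices $i$ for which $L_i$ is a proper secant at $e_i$.

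The main obstacle is to exclude $n = 0$. Computing the tangent $X A e_i^T = 0$ at each $e_i$ shows that $L_i$ is tangent to $\mathcal{O}$ iff $a x_0 + b y_0 = 0$ (for $i = 1$), $a = -y_0$ (for $i = 2$), or $b = -x_0$ (for $i = 3$). If all three held at once, substituting the latter two into the first yields $-2 x_0 y_0 = 0$, contradicting $x_0 y_0 \neq 0$ with $q$ odd. Hence $n \in \{1, 2, 3\}$, and the count of joined points is $(q-2) - n \in \{q-5, q-4, q-3\}$, proving (iv). Condition (v) follows by exhibiting, for each $n \in \{1, 2, 3\}$, a pair $(P, \mathcal{O})$ realizing it; since the three tangency equations are essentially independent conditions on $(a, b)$ for fixed $P$ (the first being linear, the other two pinning $a$ and $b$), each value of $n$ is readily achieved for $q \geq 5$, and one may verify $P \notin \mathcal{O}$ in each chosen instance.
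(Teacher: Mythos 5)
Your proof is correct, and for the key step (axiom (iv)) it takes a genuinely different and more conceptual route than the paper. The paper parametrizes the points of $C_A$ as $(1,\tfrac{-by}{a+y},y)$, normalizes $P=(1,1,1)$, and runs an exhaustive case table over the parameters $(a,b)$ of Equation (\ref{equa}) to count admissible $y$; you instead observe that a point $Q\in \mathcal{O}\cap\mathcal{P}_1$ fails to be joined to $P$ exactly when it is the second intersection of $\mathcal{O}$ with one of the three lines $L_i=Pe_i$, so the count of joined points is $(q-2)-n$ with $n$ the number of secants among $L_1,L_2,L_3$, and the only thing to rule out is $n=0$, which your tangent-line computation ($ax_0+by_0=0$, $a=-y_0$, $b=-x_0$ forcing $2x_0y_0=0$) does in one line without normalizing $P$. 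This buys a uniform explanation of why exactly the three values $q-5,q-4,q-3$ occur (they correspond to $3,2,1$ secants) and avoids both the WLOG reduction and the case analysis; what the paper's table buys in exchange is an explicit classification of which $(a,b)$ realize each value, which is what makes axiom (v) immediate there. That is the one place your write-up is thinner than it should be: "each value of $n$ is readily achieved" deserves explicit witnesses, e.g.\ with $P=(1,1,1)$ take $(a,b)=(-1,-1)$ for $n=1$, $(a,b)=(-1,b)$ with $b\neq 0,\pm1$ for $n=2$, and any $(a,b)$ with $a,b\neq 0$, $a\neq-1$, $b\neq-1$, $a+b\neq 0$ and $a+b+1\neq 0$ for $n=3$ (all available once $q\geq 5$, and the last condition is exactly $P\notin C_A$). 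With that filled in, the argument is complete.
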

\begin{proof}
From Theorems \ref{five} and \ref{m23}, we can get the axioms (i)-(iii) of generalized partial geometry.

For axioms (iv)-(v), take any point $P\in\mathcal{P}_1$ and any conic $C_A\in\mathcal{C}_1$ such that $P\notin C_A$, where $A$ is the corresponding symmetric matrix of the conic $C_A$.
Without loss of generality, we can set $P=(1,1,1)$. Let the form of $A$ be the same as Equation (\ref{equa}), then $P\notin C_A$ if and only if $a\neq -b-1$.
By calculation, we  know that the points in $C_A$ are $(1,\frac{-by}{a+y},y)$, where $y\neq-a,y\in\mathbb{F}_q^*$. Next, we calculate the number of points joined to $P$ in $C_A$.

Suppose $Q=(1,\frac{-by}{a+y},y)\in C_A$, then $P$ and $Q$ are adjacent if and only if any three points of $e_1,e_2,e_3,P,Q$ are non-collinear. Then $y\neq1,-a,-b-a,\frac{-a}{1+b}$ and there is the following table:\\ \vspace{-0.5cm}

{\small \begin{center}
\scalebox{1}{
\begin{tabular}{|c|c|c|c|}
\hline
$b$ & $a$  & Conditions of $y$ & Number \\ \hline
\multirow{2}{*}{$b=1$}  & $a=-1$ & $y\neq1,\frac{1}{2}$ & $q-3$  \\ \cline{2-4}
       & $a\neq-1$ & $y\neq1,-a,-1-a,\frac{-a}{2}$ & $q-5$ \\ \hline

             & $a=-1$ & $y\neq1,2$ & $q-3$  \\ \cline{2-4}
$b=-1$       & $a=1$ & $y\neq1,-1$ & $q-3$ \\ \cline{2-4}
             & $a\neq\pm1$ & $y\neq1,-a,1-a$ & $q-4$ \\  \hline

              & $a=-1$ & $y\neq1,-b+1,\frac{1}{1+b}$ & $q-4$ \\ \cline{2-4}
$b\neq\pm1$   & $a=-b$ & $y\neq1,b,\frac{b}{1+b}$ & $q-4$ \\ \cline{2-4}
              & $a\neq-1,-b$ & $y\neq1,-a,-b-a,\frac{-a}{1+b}$ & $q-5$ \\  \hline
\end{tabular}}
\end{center}}
Therefore, the number of points in $C_A$ joined to $P$ is $q-5,q-4$ or $q-3$, and $(\mathcal{P}_1, \mathcal{C}_1)$ is a PG$(q-3,q-3;q-5,q-4,q-3)$.
\end{proof}

\subsection{SRPGs from hyperbolic quadrics in PG$(3,q)$}
In this section, we use hyperbolic quadrics in PG$(3,q)$ to construct strongly regular generalized partial geometries.
From Lemma \ref{w1}, the generator of a hyperbolic quadric is a line.

Let $\mathcal{L}$ be the set of all lines and $\mathcal{H}$ be the set of all hyperbolic quadrics in PG$(3,q)$, then $|\mathcal{L}|=(q^{2}+1)(q^{2}+q+1)$ and $|\mathcal{H}|=\frac{1}{2}q^{4}(q^{2}+1)(q^{3}-1)$ from Reference \cite{abc8}.

\begin{lem}\label{th12}\cite{skew}
In PG$(3, q)$, there is a unique hyperbolic quadric containing fixed three lines in which any two  lines of them are skew.
\end{lem}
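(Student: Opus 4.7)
The plan is to prove the lemma in two steps: first establishing existence by reducing the three lines to a canonical coordinate form, then establishing uniqueness via the classical construction of the opposite regulus.

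For existence, I would use Lemma \ref{lem1} together with the stabilizer of a line in PGL$_4(q)$ to normalize $\ell_1=\{(a:b:0:0)\}$ and $\ell_2=\{(0:0:c:d)\}$. Since $\ell_3$ is skew to both, projecting its points onto the first two and onto the last two coordinates gives bijections onto $\ell_1$ and $\ell_2$, so $\ell_3$ is described by an invertible $2\times 2$ matrix; the block-diagonal subgroup preserving $\ell_1$ and $\ell_2$ then brings $\ell_3$ to $\{(a:b:a:b)\}$. The quadric $\mathcal{H}\colon x_0x_3-x_1x_2=0$ contains all three lines by direct substitution, has non-degenerate symmetric matrix, and after $x_1\mapsto -x_1$ takes the hyperbolic normal form of Equation (\ref{r1}) in Lemma \ref{mj1}.

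For uniqueness, let $\mathcal{H}'$ be any hyperbolic quadric containing $\ell_1,\ell_2,\ell_3$. I would construct $q+1$ transversals as follows: through each point $P\in\ell_1$, the join $\langle P,\ell_2\rangle$ is a $2$-flat, which by the dimension formula in PG$(3,q)$ meets the line $\ell_3$ (skew to $\ell_2$) in exactly one point $R$; the line $PR$ lies in this plane and meets each of $\ell_1,\ell_2,\ell_3$. Two such transversals through distinct $P,P'\in\ell_1$ must be skew, since otherwise the plane they span would contain $\ell_1$ together with two distinct points of $\ell_2$, forcing $\ell_1$ and $\ell_2$ to be coplanar. Thus the $q+1$ transversals are pairwise skew and their union has exactly $(q+1)^2$ points. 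Each transversal meets $\mathcal{H}'$ in three distinct points (one on each $\ell_i$), so is contained in $\mathcal{H}'$; since a hyperbolic quadric in PG$(3,q)$ has exactly $(q+1)^2$ points, $\mathcal{H}'$ must coincide with the transversal union, which depends only on $\ell_1,\ell_2,\ell_3$.

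The main obstacle is the normalization step in existence: one must justify that PGL$_4(q)$ can simultaneously bring the three given lines to the stated canonical triple. This decomposes into transitivity on ordered pairs of skew lines (provable via Lemma \ref{lem1} together with a stabilizer argument) and transitivity of the stabilizer of two skew lines on the set of lines skew to both, which follows from the block-diagonal $\mathrm{GL}_2(q)\times\mathrm{GL}_2(q)$ action on the two coordinate pairs, with the scalar identification handled carefully. Everything else, namely the dimension count producing the transversals, their pairwise skewness, and the identification of the normal-form quadric as hyperbolic, is routine.
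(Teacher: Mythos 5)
The paper offers no proof of this lemma at all: it is imported verbatim from \cite{skew} as a known fact, so there is no in-paper argument to compare yours against. What you have written is the classical regulus/opposite-regulus proof, and it is essentially correct and self-contained. The normalization in the existence step is sound: the stabilizer of the two coordinate lines induces a $\mathrm{GL}_2(q)\times\mathrm{GL}_2(q)$ action sending the invertible matrix $N$ describing $\ell_3$ to $ANB$, which is transitive on invertible matrices, and $x_0x_3-x_1x_2=0$ visibly contains the canonical triple and is equivalent to the hyperbolic normal form of Lemma \ref{mj1}. The uniqueness step (each of the $q+1$ transversals meets $\mathcal{H}'$ in three points of a conic section of a line, hence lies on $\mathcal{H}'$; their disjoint union already has $(q+1)^2$ points, the full size of a hyperbolic quadric) is the right argument. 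One small gap in the skewness claim: if two transversals through distinct $P,P'\in\ell_1$ met, the plane they span contains $\ell_1$ and their intersection points $S,S'$ with $\ell_2$, but you have not ruled out $S=S'$, in which case no contradiction with $\ell_1,\ell_2$ skew arises yet; you must then appeal to $\ell_3$ (the plane would contain two distinct points of $\ell_3$, forcing $\ell_1,\ell_3$ coplanar, or else the two transversals share two points and coincide). This is a one-line repair. A second, definitional point worth a half-sentence: your argument identifies the \emph{point set} of $\mathcal{H}'$; to conclude uniqueness of the quadric one uses that a non-degenerate quadric in PG$(3,q)$, $q$ odd, is determined by its point set, which is standard and consistent with how the paper treats quadrics as point sets throughout Section 3.
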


The matrix representation for a line in PG$(3,q)$ under scalar multiplication is one of the following forms:
$$\left(\begin{array}{cccc}x&y&1&0\cr z&m&0&1\end{array}\right),
\left(\begin{array}{cccc}x&1&0& 0\cr z&0& a&1\end{array}\right),\left(\begin{array}{cccc}1&0&0&0\cr 0&m&a&1 \end{array}\right),$$
$$\left(\begin{array}{cccc}x&1&0&0\cr z&0&1&0\end{array}\right),\left(\begin{array}{cccc}1 &0&0&0\cr 0&m&1&0\end{array}\right),\left(\begin{array}{cccc}1&0&0 &0\cr 0&1&0&0\end{array}\right),$$
where $x,y,z,m,a\in \mathbb{F}_{q}$.
Next, we represent a line with the corresponding matrix $L$.

When $q$ is an odd prime power, the equation of a hyperbolic quadric in PG$(3, q)$ can be represented by
$$XHX^{T}=0,$$
where $X=(x,y,z,w)$ is a point, $H\in \mathbb{F}_q^{4\times 4}$ and $H=H^{T}$ is cogredient to
$$K=\left(\begin{array}{cc}0&I_{2}\cr I_{2}&0\end{array}\right). $$
In the following, we divide $H$ into $2\times2$ matrices for convenience, that is
\begin{equation}\label{equa4}
H=\left(\begin{array}{cc}A&B\cr B^{T}&C\end{array}\right),
\end{equation}
where $A,B,C\in F_q^{2\times2}$, and $A$ and $C$ are symmetric matrices.
We represent a hyperbolic quadric with the corresponding matrix $H$.

\begin{lem}\label{th13}
In the incidence structure $(\mathcal{L}, \mathcal{H})$, suppose the map $Q^\sigma$ satisfies that $Q^\sigma(L)=LQ$, $Q^\sigma(H)=Q^{-1}H(Q^{-1})^T$, where $L\in\mathcal{L},H\in\mathcal{H}$ and $Q\in GL_4(F_q)$. Then $Q^\sigma$ is isomorphic.
\end{lem}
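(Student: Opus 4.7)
The plan is to verify directly that $Q^\sigma$ is an isomorphism of the incidence structure $(\mathcal{L},\mathcal{H})$, which means showing three things: (a) $Q^\sigma$ restricts to a well-defined map $\mathcal{L}\to\mathcal{L}$ and $\mathcal{H}\to\mathcal{H}$; (b) each restriction is a bijection; and (c) the containment relation $L\subseteq H$ is preserved. All three reduce to simple matrix identities, and the core of the proof is essentially a single computation.

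For (a), I would first note that since $L$ is a $2\times 4$ matrix of rank $2$ and $Q\in GL_4(\mathbb{F}_q)$, the product $LQ$ is again $2\times 4$ of rank $2$, so after reducing to one of the six canonical row-reduced forms it represents a unique line in PG$(3,q)$. For the quadric part, $Q^{-1}H(Q^{-1})^T$ is symmetric (as $H$ is) and invertible, and the identity $Q\bigl(Q^{-1}H(Q^{-1})^T\bigr)Q^T=H$ shows that $Q^{-1}H(Q^{-1})^T$ is cogredient to $H$, hence cogredient to the standard form $K$ in \eqref{h1}, hence defines a hyperbolic quadric. For (b), I would exhibit the inverse $\bigl(Q^\sigma\bigr)^{-1}=(Q^{-1})^\sigma$ by direct verification: $(Q^{-1})^\sigma\bigl(Q^\sigma(L)\bigr)=LQQ^{-1}=L$ and $(Q^{-1})^\sigma\bigl(Q^\sigma(H)\bigr)=Q\bigl(Q^{-1}H(Q^{-1})^T\bigr)Q^T=H$.

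The key computation is (c): the line $L$ lies in the quadric $H$ if and only if every point of $L$ satisfies $XHX^T=0$, which is equivalent to $LHL^T=0$. Applying the map,
\[
\bigl(LQ\bigr)\bigl(Q^{-1}H(Q^{-1})^T\bigr)\bigl(LQ\bigr)^T
=L\bigl(QQ^{-1}\bigr)H\bigl((Q^{-1})^TQ^T\bigr)L^T
=LHL^T,
\]
using $(Q^{-1})^TQ^T=(QQ^{-1})^T=I$. Thus $L\subseteq H\iff Q^\sigma(L)\subseteq Q^\sigma(H)$. I do not expect any real obstacle here; the only subtlety is well-definedness on the projective/scalar equivalence classes (lines are determined up to left multiplication by $GL_2(\mathbb{F}_q)$, and quadric matrices up to a nonzero scalar), but both actions commute with right multiplication by $Q$ and with the cogredience $H\mapsto Q^{-1}H(Q^{-1})^T$, so both formulas descend to the equivalence classes without issue.
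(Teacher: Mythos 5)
Your proof is correct and follows the same route as the paper: the paper simply asserts that $L\in H$ if and only if $Q^\sigma(L)\in Q^\sigma(H)$, which is exactly the computation $\bigl(LQ\bigr)\bigl(Q^{-1}H(Q^{-1})^T\bigr)\bigl(LQ\bigr)^T=LHL^T$ you carry out explicitly. Your additional checks of well-definedness and bijectivity are sound details that the paper leaves implicit.
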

\begin{proof}
It is easy to see that $L\in H$ if and only if $Q^\sigma(L)\in Q^\sigma(H)$, so  $Q^\sigma$ is an isomorphic map of $(\mathcal{L}, \mathcal{H})$.
\end{proof}

\begin{lem}
The non-degenerate quadrics containing the line $(I_2~0)$ are all hyperbolic.
\end{lem}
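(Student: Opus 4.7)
The plan is to use the classification of non-degenerate quadrics in PG$(3,q)$: by Lemma \ref{mj1}, any such quadric is either hyperbolic or elliptic, so it suffices to rule out the elliptic case for a quadric containing the line $(I_2~0)$.

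The quickest route is a direct appeal to Lemma \ref{w1}. The projective dimension of a generator of $\mathcal{Q}_3$ is $\frac{1}{2}(3-3+\omega)$, which equals $0$ when $\mathcal{Q}_3$ is elliptic and $1$ when it is hyperbolic. Hence an elliptic quadric in PG$(3,q)$ has only points as maximal totally singular subspaces and contains no line whatsoever. Since our quadric contains the line $(I_2~0)$, it cannot be elliptic, so it is hyperbolic.

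For a more explicit, computational proof in the spirit of the block-form setup of Equation (\ref{equa4}), I would proceed as follows. Writing $H=\begin{pmatrix}A&B\\B^{T}&C\end{pmatrix}$, the line $(I_2~0)$ consists of the points $(x,y,0,0)$, and the condition $(x,y,0,0)H(x,y,0,0)^{T}=0$ for all $(x,y)\in\mathbb{F}_q^2$ reduces to $(x,y)A(x,y)^{T}\equiv 0$; since $A$ is symmetric and $q$ is odd, this forces $A=0$. Then swapping the two $2$-block rows of $H=\begin{pmatrix}0&B\\B^{T}&C\end{pmatrix}$ (an even permutation on the four rows) puts it into block upper triangular form with diagonal blocks $B^{T}$ and $B$, yielding $\det H=\det(B)^{2}$, a nonzero square in $\mathbb{F}_q^{*}$. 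Since the square class of the discriminant is a cogredience invariant (because $\det(QHQ^{T})=\det(Q)^{2}\det(H)$) and the normal forms (\ref{h1}) and (\ref{h2}) have discriminants $1$ and $z$ respectively — one a square, the other a non-square — the matrix $H$ must be cogredient to the hyperbolic normal form.

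The main obstacle is essentially bookkeeping: either confirming that Lemma \ref{w1}'s dimension formula gives generator dimension $0$ in the elliptic case of dimension $3$, or verifying the block determinant identity $\det H=\det(B)^{2}$ and computing the discriminants of the two normal forms. Neither step involves real difficulty, so the proof should be short.
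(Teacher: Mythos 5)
Both of your arguments are correct, and each takes a genuinely different route from the paper. The paper's proof is fully constructive: after deriving $A=0$ and the non-degeneracy of $B$ from $(I_2~0)\subseteq H$, it exhibits an explicit matrix $Q=\left(\begin{smallmatrix}I_2&0\cr \frac{1}{2}CB^{-1}&B^{T}\end{smallmatrix}\right)$ with $QKQ^T=H$, so cogredience to the hyperbolic normal form is verified by a single matrix multiplication (using $C=C^T$). Your first argument is the slickest: Lemma \ref{w1} gives generator dimension $\frac{1}{2}(3-3+0)=0$ for an elliptic $\mathcal{Q}_3$, so elliptic quadrics contain no lines, and Lemma \ref{mj1} leaves only the hyperbolic case; this is essentially a one-line deduction but relies on the cited classification of generators rather than anything computed in the paper. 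Your second argument sits between the two: the reduction to $A=0$ (valid since $q$ is odd, by evaluating the quadratic form at $(1,0)$, $(0,1)$, $(1,1)$), the identity $\det H=\det(B)^2$ via the even block-row swap, and the discriminants $1$ and $z$ of the normal forms (\ref{h1}) and (\ref{h2}) are all correct, and the square class of the discriminant is indeed a cogredience invariant that separates the two classes in even dimension over $\mathbb{F}_q$ with $q$ odd. What the paper's explicit $Q$ buys is that the cogredience transformation itself is in hand, which is in the spirit of the transformation arguments used immediately afterwards (Lemma \ref{th13}, Theorems \ref{mat2} and \ref{th31}); what your versions buy is brevity and independence from having to guess the right $Q$.
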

\begin{proof}
Suppose $H$ is a non-degenerate quadric   with the form as (\ref{equa4}).
Then the line $(I_2~0)$ is contained in $H$ if and only if $A=0$ and $B$ is non-degenerate.
Let
$$Q=\left(\begin{array}{cc}I_2&0\cr \frac{1}{2}C B^{-1}&B^{T}\end{array}\right)\in PGL_{4}(q),$$
then $$Q\left(\begin{array}{cc}0& I_2\cr I_2&0\end{array}\right)Q^T=H.$$
So the non-degenerate quadric containing the line $(I_2~0)$ is hyperbolic.
\end{proof}

For a given line $L$, we take all the line skew to it and the hyperbolic quadrics containing it, denoted by $\mathcal{L}_1$ and $\mathcal{H}_1$ respectively.
By the transitivity of lines, suppose $L$ is $(I_2~0)$, then $\mathcal{L}_1$ is the set of the lines with the form $(N~I_2)$, $N\in \mathbb{F}_q^{2\times2}$ and $\mathcal{H}_1$ is the set of the hyperbolic quadrics containing the line $(I_2~0)$.
Then   $\mathcal{L}_1$ and $\mathcal{H}_1$ can be represented as follows:
$$\mathcal{L}_1=\{L_N=(N~I_2):N\in \mathbb{F}_q^{2\times2}\},$$
$$\mathcal{H}_1=\{H_{B,C}=\{L_N\in\mathcal{L}_1:L_N\left(\begin{array}{cc}0&B\cr B^T&C\end{array}\right)L_N^T=0\}:B\in GL_2(\mathbb{F}_q),C^T=C\}.$$
It is not difficult to check that a quadric with above form is hyperbolic and
a line $L_N$ is contained in a hyperbolic quadric $H_{BC}$ if and only if $B^TN^T+NB+C=0$.

Let the incident matrix of $(\mathcal{L}_1, \mathcal{H}_1)$ be $M_2$.
Then $M_2$ is a $q^4\times q^4(q^2-1)$ matrix.

\begin{thm}\label{mat2}
For the incidence matrix $M_2$, we have

$(1)$ the number of $1'$s in each column is $q;$

$(2)$ the number of $1'$s in each row is $q(q^2-1).$
\end{thm}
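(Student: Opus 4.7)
The plan is to compute each count directly from the incidence condition
\[
B^{T}N^{T}+NB+C=0,
\]
which (as derived just before the statement) characterizes when $L_N\in H_{B,C}$.

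For part (1), I would fix a quadric $H_{B,C}\in\mathcal{H}_1$ and count the $N\in\mathbb{F}_q^{2\times 2}$ satisfying the incidence relation. Since $B\in GL_2(\mathbb{F}_q)$, substituting $M=NB$ turns the condition into $M+M^{T}=-C$. Writing $M=S+K$ as its symmetric plus skew-symmetric parts, this becomes $2S=-C$, which (as $q$ is odd) forces $S=-\tfrac12 C$ while $K$ is completely free. Because the space of $2\times 2$ skew-symmetric matrices over $\mathbb{F}_q$ is $1$-dimensional, there are exactly $q$ choices of $K$, and hence $q$ solutions $M$, giving $q$ solutions $N=MB^{-1}$. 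This yields (1). (Geometrically this is also consistent with the well-known fact that a hyperbolic quadric in $\mathrm{PG}(3,q)$ carries two reguli of $q+1$ lines each: the fixed line $(I_2~0)$ lies in one regulus, and the remaining $q$ lines of that regulus are precisely the lines of $\mathcal{L}_1$ on the quadric.)

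For part (2), I would fix $L_N\in\mathcal{L}_1$ and count the pairs $(B,C)$ with $B\in GL_2(\mathbb{F}_q)$ and $C^{T}=C$ that yield a quadric through $L_N$. For each choice of $B\in GL_2(\mathbb{F}_q)$, the incidence condition forces $C=-B^{T}N^{T}-NB$, and a quick check gives $C^{T}=C$ automatically. Hence before identification there are $|GL_2(\mathbb{F}_q)|=(q^{2}-1)(q^{2}-q)$ admissible pairs. The quadrics $H_{B,C}$ and $H_{kB,kC}$ coincide for every $k\in\mathbb{F}_q^{*}$, and this is the only coincidence (the scalar is read off from the off-diagonal block of the $4\times 4$ Gram matrix). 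Dividing by $q-1$ gives
\[
\frac{(q^{2}-1)(q^{2}-q)}{q-1}=q(q^{2}-1),
\]
as claimed. As a consistency check, double counting against (1) gives the total number of incidences as $|\mathcal{H}_1|\cdot q = q^{4}(q^{2}-1)\cdot q$, and dividing by $|\mathcal{L}_1|=q^{4}$ recovers the same row sum $q(q^{2}-1)$.

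The main subtlety is in (2): one must argue that two parameter pairs $(B,C)$ and $(B',C')$ represent the same quadric precisely when they differ by a global nonzero scalar. I would handle this by noting that the $4\times 4$ symmetric matrix defining $H_{B,C}$ has its upper-right $2\times 2$ block equal to $B$, so the scaling factor is forced as soon as $B$ is fixed up to scalar, and then $C$ must scale the same way. Apart from this bookkeeping, both parts are straightforward linear-algebra computations.
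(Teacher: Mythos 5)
Your proof is correct, and it reaches both counts by a more direct computation than the paper does. For part (1) the paper reduces to the single quadric $H_{I_2,0}$ via the cogredience transformation built in the preceding lemma (which fixes the line $(I_2~0)$ and hence permutes $\mathcal{L}_1$ and $\mathcal{H}_1$), and then counts the $q$ alternating $2\times 2$ matrices; your substitution $M=NB$ achieves the same reduction in one line without invoking the group action, and the symmetric-plus-skew decomposition (valid since $q$ is odd) gives the same answer. For part (2) the difference is more substantive: the paper argues that the stabilizer of $(I_2~0)$ is transitive on $\mathcal{L}_1$, so all row weights are equal, and then reads off the common value from the total incidence count $|\mathcal{H}_1|\cdot q$ divided by $|\mathcal{L}_1|$; you instead fix $L_N$ and count admissible pairs $(B,C)$ directly, noting that $C$ is forced (and automatically symmetric) once $B\in GL_2(\mathbb{F}_q)$ is chosen, and then divide by the scalar identification. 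Your route makes explicit the bookkeeping the paper leaves implicit --- in particular that $(B,C)$ and $(B',C')$ give the same quadric only when proportional, which is also exactly what justifies the stated size $|\mathcal{H}_1|=q^4(q^2-1)$ --- while the paper's route is shorter once the transitivity is granted. Both arguments are sound, and your double-counting consistency check ties the two parts together in the same way the paper's row-weight step does.
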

\begin{proof}
Because the quadric $H$ containing the line $(I_{2}~0)$ is cogredient to $K$,
$L_N$ is in $H_{I_{2},0}$ if and only if $N^T+N=0$. So the number of $1'$s in each column of $M_2$ is $q$.
By the transitivity of lines, the line skew to $(I_2~0)$ can be transformed into $L_0$. So the number of $1'$s in each row  of $M_2$ is the same, that is $q(q^2-1)$.
\end{proof}

\begin{thm}\label{th31}
In the point graph of $(\mathcal{L}_1, \mathcal{H}_1)$, $L_{N_1}$ and $L_{N_2}$ are adjacent if and only if $rank(N_2-N_1)=2$.
\end{thm}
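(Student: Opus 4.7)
The plan is to reduce adjacency in the point graph directly to the incidence condition $B^T N^T + NB + C = 0$ already derived in the paper, and then translate the joint solvability of this condition for two distinct matrices $N_1, N_2$ into a rank condition on $N_2 - N_1$.

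First I would unpack what adjacency of $L_{N_1}$ and $L_{N_2}$ means: by definition of the point graph, this is equivalent to the existence of some $H_{B,C} \in \mathcal{H}_1$ (so $B \in GL_2(\mathbb{F}_q)$, $C^T = C$) containing both lines. Using the incidence criterion $B^T N_i^T + N_i B + C = 0$ for $i=1,2$, I would subtract the two equations to eliminate $C$ and obtain
\[
(N_2 - N_1) B + B^T (N_2 - N_1)^T = 0.
\]
Writing $M := N_2 - N_1$, the adjacency condition becomes: there exists $B \in GL_2(\mathbb{F}_q)$ such that $MB$ is skew-symmetric. I should also observe that once such a $B$ is chosen, taking $C = -B^T N_1^T - N_1 B$ automatically produces a symmetric $C$, so the second half of the data for a quadric in $\mathcal{H}_1$ comes for free.

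Next I would exploit the very restrictive structure of $2\times 2$ skew-symmetric matrices over $\mathbb{F}_q$ (with $q$ odd): they are scalar multiples of $J = \bigl(\begin{smallmatrix} 0 & 1 \\ -1 & 0\end{smallmatrix}\bigr)$, hence of rank $0$ or $2$. Since $B$ is invertible, $\operatorname{rank}(MB) = \operatorname{rank}(M)$, so a necessary condition is $\operatorname{rank}(M) \in \{0,2\}$. For distinct lines $M \neq 0$, forcing $\operatorname{rank}(M) = 2$. Conversely, if $\operatorname{rank}(M) = 2$, then $M$ is invertible and $B := M^{-1} J \in GL_2(\mathbb{F}_q)$ satisfies $MB = J$, which is skew-symmetric; combined with the corresponding symmetric $C$, this exhibits an explicit $H_{B,C} \in \mathcal{H}_1$ through both $L_{N_1}$ and $L_{N_2}$.

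The only mild obstacle is making sure both directions are accounted for cleanly: the ``only if'' direction uses the rank-preserving property of multiplication by the invertible $B$, while the ``if'' direction requires producing an explicit $B$ (and then verifying that the resulting $C$ is symmetric, which follows by inspection). Once these pieces are in place the equivalence $\operatorname{rank}(N_2 - N_1) = 2 \iff L_{N_1} \sim L_{N_2}$ is immediate.
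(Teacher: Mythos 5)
Your proposal is correct and follows essentially the same route as the paper: both reduce adjacency to the existence of an invertible $B$ with $(N_2-N_1)B$ alternating, invoke the evenness of the rank of a $2\times 2$ alternating matrix for necessity, and exhibit an explicit skew-symmetric witness for sufficiency. The only difference is presentational — you eliminate $C$ by subtracting the two incidence equations (and note that the resulting $C$ is automatically symmetric, a point the paper handles instead by normalizing $N_1$ to $0$ and $N_2-N_1$ to $I_2$ via the isomorphisms of Lemma \ref{th13}).
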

\begin{proof}
In the point graph of $(\mathcal{L}_1, \mathcal{H}_1)$, $L_{N_1}\sim L_{N_2}$ if and only if there is {\small$H=\left(\begin{array}{cc}0&B\cr B^T&C\end{array}\right)$ }such that $B^TN_i^T+N_iB+C=0,~i=1,2.$
Under the action of {\small$\left(\begin{array}{cc}I_2&0\cr -N_1&I_2\end{array}\right)$}, the two lines $L_{N_1}=(N_1~I_2)$ and $L_{N_2}=(N_2~I_2)$ can be transformed into $(0~I_2)$ and $(N_2-N_1~I_2)$.
Then a hyperbolic quadric $H$ containing $(0~I_2)$ implies $C=0$.

Suppose that $N_3=N_2-N_1$, $H$ containing $(N_3~I_2)$ implies $B^TN_3^T+N_3B=0$, that is $N_3B$ is an alternating matrix. It is well known that the rank of an alternating matrix is even, so $rank(N_3)=2$.

When $rank(N_3)=2$, under the action of {\small$\left(\begin{array}{cc}N_3^{-1}&0\cr 0&I_2\end{array}\right)$}, $(N_3~I_2)$ can be transformed into $(I_2~I_2)$. Then a hyperbolic quadric $H$ containing $(I_2~I_2)$ means $B^T+B=0$. So the hyperbolic quadric is
$$\left(\begin{array}{cc}0&B\cr B^T&0\end{array}\right),$$
where {\small$B=\left(\begin{array}{cc}0&1\cr -1&0\end{array}\right)$}.
Thus $(N_1~I_2)\sim(N_2~I_2)$ if and only if $rank(N_2-N_1)=2$.
\end{proof}

Obviously, the point graph of $(\mathcal{L}_1, \mathcal{H}_1)$ is isomorphic to the graph $G_2$ whose vertex set is all $2\times2$ matrices over $\mathbb{F}_q$ and any two vertices is adjacent if and only if the rank of their difference is $2$.

By Theorem 2.9 and Theorem 2.11 in \cite{Sche}, we can know that $G_2$ is strongly regular and its parameters are as follows.

\begin{thm}\cite{Sche}\label{ld12}
$G_2$ is a strongly regular graph with parameters
$$v=q^4,~k=q(q-1)(q^2-1),~\lambda=q(q^3-2q^2-q+3),~\mu=q(q-1)(q^2-q-1).$$
\end{thm}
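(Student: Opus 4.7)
My plan is to exploit two families of graph automorphisms of $G_2$. First, every translation $X\mapsto X+T$ on $M_2(\mathbb{F}_q)$ preserves ranks of differences, hence is an automorphism. Second, the action of $GL_2(\mathbb{F}_q)\times GL_2(\mathbb{F}_q)$ by $(P,Q)\cdot X=PXQ^{-1}$ preserves rank, so combined with translation it also preserves ranks of differences. Since the $GL_2\times GL_2$-action is transitive on each rank stratum, I may assume any adjacent pair of vertices is $(0,I_2)$ and any non-adjacent distinct pair is $(0,E_{11})$. This reduces strong regularity to two single counts, while $v=q^4$ and $k=|GL_2(\mathbb{F}_q)|=(q^2-1)(q^2-q)=q(q-1)(q^2-1)$ are immediate.

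For $\lambda$ I would count matrices $C$ with $\mathrm{rank}(C)=2$ and $\mathrm{rank}(C-I_2)=2$, i.e., $\det C\neq 0$ and $\det(C-I_2)\neq 0$. By inclusion--exclusion on the singularity events,
\[
\lambda = q^4 - 2N_s + N_{ss},
\]
where $N_s=1+(q+1)(q^2-1)=q(q^2+q-1)$ is the number of singular $2\times 2$ matrices (split into rank $0$ and rank $1$), and $N_{ss}$ counts $C$ with both $C$ and $C-I_2$ singular. The latter condition forces $0$ and $1$ to both be eigenvalues of $C$, so $C$ is $GL_2$-conjugate to $\mathrm{diag}(0,1)$; the centralizer of $\mathrm{diag}(0,1)$ in $GL_2$ consists of the invertible diagonal matrices, giving $N_{ss}=|GL_2|/(q-1)^2=q(q+1)$. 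Substituting yields $\lambda=q(q^3-2q^2-q+3)$.

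For $\mu$ the same scheme applies with $I_2$ replaced by $E_{11}$. The singularity count $N_s$ is unchanged, while the intersection $N'_{ss}$ is computed directly: writing $C=\bigl(\begin{smallmatrix}a&b\\ c&d\end{smallmatrix}\bigr)$, subtracting $\det(C-E_{11})=(a-1)d-bc$ from $\det C=ad-bc$ forces $d=0$, whence $\det C=-bc=0$ gives $b=0$ or $c=0$, so $N'_{ss}=2q^2-q$. Hence $\mu=q^4-2q(q^2+q-1)+(2q^2-q)=q(q-1)(q^2-q-1)$. The only slightly delicate step is the conjugacy-class count giving $N_{ss}$; everything else is routine inclusion--exclusion once the symmetry reductions have been made.
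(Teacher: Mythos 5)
Your proposal is correct; I checked each count. The reduction to the pairs $(0,I_2)$ and $(0,E_{11})$ is legitimate because translations and the maps $X\mapsto PXQ^{-1}$ do generate a group of automorphisms of $G_2$ acting transitively on ordered pairs at each rank distance, and the three arithmetic ingredients all check out: $N_s=q(q^2+q-1)$ agrees with $q^4-|GL_2(\mathbb{F}_q)|$; the condition $\det C=\det(C-I_2)=0$ does force characteristic polynomial $x(x-1)$, hence diagonalizability and the orbit count $|GL_2|/(q-1)^2=q(q+1)$; and the direct computation $N'_{ss}=q(2q-1)$ is right. The resulting values satisfy the consistency identity $k(k-\lambda-1)=(v-k-1)\mu$, which is a useful sanity check. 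Your route is genuinely different from the paper's: the paper gives no proof at all, instead citing Theorems 2.9 and 2.11 of the book on association schemes of matrices, where $G_2$ appears as a relation graph of the bilinear forms scheme on $\mathbb{F}_q^{2\times 2}$ and the parameters come out of the general intersection-number formulas for that scheme. Your argument is self-contained and elementary, at the cost of being specific to the $2\times2$ case (the association-scheme machinery handles $m\times n$ matrices and all rank relations uniformly); for the purposes of this paper, where only $2\times2$ matrices arise, your direct verification is a perfectly adequate and arguably more transparent substitute.
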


\begin{thm}\label{partgeo2}
$(\mathcal{L}_1, \mathcal{H}_1)$ is a PG$(q-1,q(q^2-1)-1;q-2,q-1,q)$.
\end{thm}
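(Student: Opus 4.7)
The plan is to verify the five axioms of a generalized partial geometry for $(\mathcal{L}_1,\mathcal{H}_1)$ with parameters $(q-1,\,q(q^2-1)-1;\,q-2,q-1,q)$. Axioms (ii) and (iii) are immediate from Theorem \ref{mat2}: every block contains $q=s+1$ points, and every point lies in $q(q^2-1)=t+1$ blocks.

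For axiom (i), suppose two distinct lines $L_{N_1},L_{N_2}\in\mathcal{L}_1$ both lie in some $H\in\mathcal{H}_1$. Both are skew to $L=(I_2~0)\in H$, and since any two lines of a hyperbolic quadric from opposite reguli meet, $L_{N_1}$ and $L_{N_2}$ must lie in the same regulus as $L$. Hence $L,L_{N_1},L_{N_2}$ are three pairwise skew lines, and Lemma \ref{th12} forces $H$ to be the unique hyperbolic quadric through them.

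For axiom (iv), I fix $L_N\in\mathcal{L}_1$ and $H\in\mathcal{H}_1$ with $L_N\notin H$. By Theorem \ref{th31}, counting points of $H\cap\mathcal{L}_1$ adjacent to $L_N$ in the point graph amounts to counting lines of $H\cap\mathcal{L}_1$ that are skew to $L_N$ in $\mathrm{PG}(3,q)$. The set $H\cap\mathcal{L}_1$ consists of the $q$ lines of the regulus of $H$ through $L$ other than $L$ itself, and each point of $H$ lies on exactly one line of this regulus. I split according to $|L_N\cap H|\in\{0,1,2\}$. If $L_N$ is external to $H$, no line of the regulus meets $L_N$, so all $q$ lines of $H\cap\mathcal{L}_1$ are skew to $L_N$, giving $\alpha=q$. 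If $L_N$ is tangent to $H$ at a single point $P$, exactly one line $\ell$ of the regulus passes through $P$; since $L\cap L_N=\emptyset$, $\ell\neq L$, so exactly $q-1$ lines of $H\cap\mathcal{L}_1$ are skew to $L_N$ and $\alpha=q-1$. If $L_N$ is secant to $H$ at points $P_1,P_2$, the corresponding two lines of the regulus are distinct (otherwise they would coincide with the line $L_N$ itself, contradicting $L_N\notin H$) and neither equals $L$, leaving $\alpha=q-2$.

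Axiom (v) is then established by exhibiting, for each of the three intersection types (external, tangent, secant), an explicit choice of $N$ against a standard $H_{B,C}$ realising that type; this is a matter of picking representatives in $\mathbb{F}_q^{2\times2}$ with the correct discriminant behaviour relative to the defining quadratic form of $H_{B,C}$. I expect the main subtle step to be axiom (i), where the crucial observation is that $L_{N_1},L_{N_2}$ are forced into the same regulus as $L$ so that Lemma \ref{th12} applies; the rest reduces to a clean case analysis on the geometric position of a line relative to a hyperbolic quadric in $\mathrm{PG}(3,q)$.
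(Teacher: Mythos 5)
Your proposal is correct, but it reaches axiom (iv) by a genuinely different route from the paper. The paper stays entirely inside the matrix model: it normalises the pair $(L_0,H_{B,C})$ via Lemma \ref{th13} to $(L_0,H_{I_2,C_1})$, observes that the lines of that block are the $L_N$ with $N^T+N+C_1=0$ (a coset of the alternating matrices), reduces $C_1$ to one of the three congruence normal forms $\mathrm{diag}(1,0)$, $\mathrm{diag}(1,1)$, $\mathrm{diag}(1,z)$, and counts the non-singular matrices in each coset, obtaining $q-1$ from the rank-one class and $q-2$ or $q$ from the two rank-two classes (according to whether $-1$ is a square), i.e.\ exactly the values $q-2,q-1,q$. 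You instead translate everything back into the geometry of PG$(3,q)$: the block is the regulus of $H$ through $L$ minus $L$ itself, adjacency is skewness, and the three values of $\alpha$ correspond to $L_N$ being external, tangent or secant to $H$. One small step you should make explicit is that the algebraic condition $\mathrm{rank}(N_2-N_1)=2$ in Theorem \ref{th31} is precisely the skewness condition for $(N_1~I_2)$ and $(N_2~I_2)$ (a one-line row reduction), since the paper never states this equivalence. Your treatment of axiom (i) --- forcing $L_{N_1},L_{N_2}$ into the regulus of $L$ so that Lemma \ref{th12} applies --- is in fact more explicit than the paper, which merely cites Lemma \ref{th12}. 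What your approach buys is transparency: the three $\alpha$-values acquire a geometric meaning and the case analysis is essentially computation-free. What the paper's approach buys is that axiom (v) comes for free from the congruence classification (all three normal forms of $C_1\neq 0$ occur and are realised by some block missing $L_0$), whereas you still owe explicit witnesses that the external, tangent and secant configurations all arise; these are supplied exactly by the three classes of $C_1$, so the gap is easily closed. Both arguments are sound and yield the same parameters.
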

\begin{proof}
From Lemma \ref{th12}, Theorems \ref{mat2} and \ref{th31}, we can get the axioms (i)-(iii) of generalized partial geometry.

For axioms (iv)-(v), take any line $L\in\mathcal{L}_1$ and any hyperbolic quadric $H\in\mathcal{H}_1$ such that $L\notin H$.
Without loss of generality, we can set $L=L_{0}=(0~I_2)$ and {\small$H=H_{B,C}=\left(\begin{array}{cc}0&B\cr B^T&C\end{array}\right)$} with $C\neq0$. From Lemma \ref{th13}, under the map of ${T_2}^\sigma$ with {\small$T_2=\left(\begin{array}{cc}B&0\cr 0&I_2\end{array}\right)$}, $L_{0}$ and $H_{B,C}$ can be transformed  into
$$L_{0},~H_{I_2,C_1}=\left(\begin{array}{cc}0&I_2\cr I_2&(B^{-1})^TCB^{-1}\end{array}\right).$$
So $L_{0}$ is not in $H_{I_2,C_1}$ if and only if $C_1\neq0$, where $C_1=(B^{-1})^TCB^{-1}$.

Next we determine the number of lines adjacent to $L_{0}$ in $H_{I_2,C_1}$. Because the line $L_{N}$ is in $H_{I_2,C_1}$ if and only if $N^T+N+C_1=0$, from Theorem \ref{th31}, we only need to calculate the number of non-degenerate matrices in the set $\{N:N^T+N+C_1=0,rank(N)=2\}$.

By the transitivity of the matrices, $C_1$ is equivalent to the matrix {\small$\left(\begin{array}{cc}1&0\cr 0&0\end{array}\right)$}, {\small$\left(\begin{array}{cc}1&0\cr 0&1\end{array}\right)$} or {\small$\left(\begin{array}{cc}1&0\cr 0&z\end{array}\right)$}, where $z$ is a non-square element of $\mathbb{F}_q$. Then we can get the number of lines adjacent to $L_{0}$ is $q-2,q-1$ or $q$ by calculation.
Therefore, $(\mathcal{L}_1, \mathcal{H}_1)$ is a PG$(q-1,q(q^2-1)-1;q-2,q-1,q)$.
\end{proof}

\section{LDPC codes from SRPGs}

Let $M$ be the incidence matrix of an SRPG$(s,t;\alpha_1,\alpha_2,\dots,\alpha_r;\lambda,\mu)$ $(\mathcal{P},\mathcal{B})$ given in Section 2, and $C(M)$ be an LDPC code with parity-check matrix $M$. In this section, we consider the LDPC code $C(M)$ and give bounds on minimum distance, dimension and girth for $C(M)$.

LDPC codes were introduced along with an iterative probability-based decoding algorithm by Gallager \cite{gal} in the early 1960's.
An LDPC code is a binary linear code defined by a sparse parity-check matrix $H$, which means that $H$ contains a very small number of nonzero entries.
An LDPC code is called $(\omega_{col},\omega_{row})$-regular if $H$ has constant column weight $\omega_{col}$ and constant row weight $\omega_{row}$, and the number of $1$'s in common between any two columns is no greater than $1$.
LDPC codes in this paper refer to regular.

Johnson et al. \cite{St} presented regular LDPC codes from partial geometries. Li et al. \cite{sta,ssp} studied regular LDPC codes from strongly regular $(\alpha, \beta)$-geometries and semipartial geometry, respectively.

By the axioms (i)-(iii) of generalized partial geometry, we know the code $C(M)$ with parity-check matrix $M$ is an $(s+1,t+1)$-regular LDPC code, and the dimension of the code is $n-$rank$_2(M)$.

\subsection{Minimum distance}
Let $H$ be a parity-check matrix of a regular code $C$ with the multiplicity of the largest eigenvalue $a_1$, of $HH^T$, is $1$.
Let $\omega_{col}$ be the column weight of $H$, $\omega_{row}$ be the row weight of $H$, and $a_2$ be the second largest distinct eigenvalue of $HH^T$.
For the minimum distance $d$ of $C$, Tanner \cite{Tan} presented the  bit-oriented bound
\begin{equation}\label{ldpc1}
d\geq\frac{n(2\omega_{col}-a_2)}{(a_1-a_2)},
\end{equation}
and parity-oriented bound \begin{equation}\label{ldpc2}
d\geq\frac{2n(2\omega_{col}+\omega_{row}-2-a_2)}{\omega_{row}(a_1-a_2)}.
\end{equation}

In a similar manner to \cite{abc8}, \cite{St} and \cite{sta}, we will use the bit- and parity-oriented bounds, together with the properties of strongly regular graphs, to derive lower bounds on $d$ in terms of $s,t,\lambda$ and $\mu$ for the code $C(M)$.
Next, we discuss the eigenvalues of $MM^T$ based on the relationship between the incidence matrix and the adjacency matrix of an SRPG$(s,t;\alpha_1,\alpha_2,\dots,\alpha_r;$ $\lambda,\mu)$.

The \emph{adjacency matrix} of an SRPG$(s,t;\alpha_1,\alpha_2,\dots,\alpha_r;$ $\lambda,\mu)$ $(\mathcal{P},\mathcal{B})$ is a square matrix $A=(a_{ij})$, indexed by the points of $\mathcal{P}$, defined as
$$a_{ij}=\left\{\begin{array}{ll}1,&  P_i,P_j\in B,i\neq j;\cr 0,&\rm{otherwise}, \end{array}\right.$$
where $P_i,P_j\in\mathcal{P}, B\in\mathcal{B}$, $0\leq i,j\leq v-1$.
From the definition of an SRPG$(s,t;\alpha_1,$ $\alpha_2,\dots,\alpha_r;\lambda,\mu)$, we know $A=MM^T-(t+1)I$.

\begin{lem}\cite{SA}\label{012}
Let $G$ be a strongly regular graph with parameters $(v,k,\lambda,\mu)$ and $A$ be the adjacency matrix of $G$. Then the eigenvalues of $A$ are $k$, $u_1$ and $u_2$, where
$$u_1,u_2=\frac{(\lambda-\mu)\pm\sqrt{\Delta}}{2}.$$
Further, the multiplicities of $k$, $u_1$ and $u_2$ are $1$, $f_1$ and $f_2$ respectively, where
$$f_1,f_2=\frac{1}{2}\big[(v-1)\pm\frac{(v-1)(\mu-\lambda)-2k}{\sqrt{\Delta}}\big]$$
with $\Delta=(\lambda-\mu)^2+4(k-\mu)$.
\end{lem}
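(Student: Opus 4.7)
The plan is to reduce the problem to the classical combinatorial identity satisfied by the adjacency matrix of a strongly regular graph and then read off the eigenvalues and multiplicities by elementary linear algebra.

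First I would record the fundamental identity. By the definition of a strongly regular graph with parameters $(v,k,\lambda,\mu)$, the $(i,j)$-entry of $A^2$ counts walks of length $2$ from $i$ to $j$: it equals $k$ on the diagonal, $\lambda$ whenever $i\sim j$, and $\mu$ whenever $i\neq j$ with $i\nsim j$. This translates into the matrix equation
\begin{equation*}
A^{2}=kI+\lambda A+\mu(J-I-A),
\end{equation*}
where $J$ is the all-ones matrix, and after rearranging
\begin{equation*}
A^{2}-(\lambda-\mu)A-(k-\mu)I=\mu J.
\end{equation*}

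Next I would extract the eigenvalues. Since $A$ is symmetric with constant row sum $k$, the all-ones vector $\mathbf{1}$ is an eigenvector for the eigenvalue $k$; because $G$ is connected (which follows from $\mu>0$, or can be treated as a degenerate case separately when $\mu=0$), this eigenvalue has multiplicity $1$. For any eigenvector $v$ of $A$ orthogonal to $\mathbf{1}$ we have $Jv=0$, so the identity above forces
\begin{equation*}
u^{2}-(\lambda-\mu)u-(k-\mu)=0,
\end{equation*}
whose roots are exactly $u_{1},u_{2}=\tfrac{1}{2}\bigl((\lambda-\mu)\pm\sqrt{\Delta}\bigr)$ with $\Delta=(\lambda-\mu)^{2}+4(k-\mu)$. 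Hence the spectrum of $A$ consists of $k$ (multiplicity $1$) together with the two values $u_{1},u_{2}$ (with some multiplicities $f_{1},f_{2}$).

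Finally I would determine $f_{1}$ and $f_{2}$ from two linear constraints. The dimension count gives $1+f_{1}+f_{2}=v$, i.e.\ $f_{1}+f_{2}=v-1$. Since $A$ has zeros on the diagonal, $\mathrm{tr}(A)=0$, which yields $k+u_{1}f_{1}+u_{2}f_{2}=0$, that is $u_{1}f_{1}+u_{2}f_{2}=-k$. Solving this $2\times 2$ linear system for $f_{1},f_{2}$, substituting $u_{1}-u_{2}=\mp\sqrt{\Delta}$ and $u_{1}+u_{2}=\lambda-\mu$, and simplifying, one obtains the stated closed form
\begin{equation*}
f_{1},f_{2}=\tfrac{1}{2}\Bigl[(v-1)\pm\tfrac{(v-1)(\mu-\lambda)-2k}{\sqrt{\Delta}}\Bigr].
\end{equation*}

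No step here is genuinely hard; the only mild obstacle is keeping the sign conventions for $u_{1}$ versus $u_{2}$ consistent in the final multiplicity formulas, which is handled by choosing the $\pm$ signs in tandem in the two displayed expressions.
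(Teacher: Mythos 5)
Your proof is correct and is the standard derivation of the spectrum of a strongly regular graph (via $A^{2}=kI+\lambda A+\mu(J-I-A)$, the quadratic on $\mathbf{1}^{\perp}$, and the trace/dimension system for the multiplicities); the paper offers no proof of its own here, simply citing Cameron--van Lint, and your argument is exactly the one found in such references, including the correct handling of the $\mu=0$ (disconnected) degenerate case and the tandem choice of signs.
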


From Lemma \ref{012}, for the incidence matrix $M$ of an SRPG$(s,t;$ $\alpha_1,\alpha_2,\dots,\alpha_r;\lambda,\mu)$, $MM^T$ has eigenvalues
$$(s+1)(t+1),\frac{(\lambda-\mu)\pm\sqrt{\Delta}}{2}+t+1$$
with multiplicities
$$1,\frac{1}{2}\big[(v-1)\pm\frac{(v-1)(\mu-\lambda)-2k}{\sqrt{\Delta}}\big].$$
Then we can derive the following result by Inequations (\ref{ldpc1}) and (\ref{ldpc2}).

\begin{thm}\label{mim}
The minimum distance $d$ of an LDPC code $C(M)$ from an  SRPG$(s,t;\alpha_1,$ $\alpha_2,\dots,\alpha_r;\lambda,\mu)$ satisfies
$$d\geq \max\big\{\frac{n(4s-2t+2-\lambda+\mu-\sqrt{\Delta})}{2s(t+1)-\lambda+\mu-\sqrt{\Delta})},\frac{2n(4s-\lambda+\mu-\sqrt{\Delta})}{(t+1)(2s(t+1)-\lambda+\mu-\sqrt{\Delta})}\big\},$$
where $n=v(t+1)/(s+1)$.
\end{thm}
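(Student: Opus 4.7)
The plan is to specialise Tanner's bit-oriented and parity-oriented bounds, Inequalities~(\ref{ldpc1}) and~(\ref{ldpc2}), to the parity-check matrix $M$. The proof therefore reduces to computing $\omega_{col}$, $\omega_{row}$, $a_1$ and $a_2$ in terms of $s,t,\lambda,\mu$ and then substituting.

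First I would read off the combinatorial parameters. Axioms (ii) and (iii) of a generalized partial geometry force each column of $M$ to have exactly $s+1$ ones and each row to have exactly $t+1$ ones, so $\omega_{col}=s+1$, $\omega_{row}=t+1$, and the code length is $n=|\mathcal{B}|=v(t+1)/(s+1)$. Axiom (i), that any two distinct points lie in at most one block, combined with axiom (iii), yields the identity $MM^T=A+(t+1)I$ already noted before Lemma~\ref{012}, where $A$ is the adjacency matrix of the point graph.

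The crux is the spectrum of $MM^T$. Applying Lemma~\ref{012} to the strongly regular point graph gives the eigenvalues of $A$ as $k=s(t+1)$, simple because the point graph is connected, together with $u_{1,2}=\tfrac{1}{2}\bigl((\lambda-\mu)\pm\sqrt{\Delta}\bigr)$. Shifting by $t+1$, the distinct eigenvalues of $MM^T$ are $(s+1)(t+1)$ and $u_{1,2}+(t+1)$, so
\[
a_1=(s+1)(t+1),\qquad a_2=\tfrac{1}{2}\bigl((\lambda-\mu)+\sqrt{\Delta}\bigr)+(t+1).
\]
A short verification is needed that $a_2$ is genuinely the second largest distinct eigenvalue of $MM^T$: $u_2>u_1$ is immediate from $\sqrt{\Delta}>0$, and $u_2\geq 0$ for any connected non-complete strongly regular graph, so even if $M$ has a rank drop (contributing a zero eigenvalue to $MM^T$), that zero is still dominated by $u_2+(t+1)\geq t+1$.

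The rest is routine algebra. One computes
\[
a_1-a_2=\tfrac{1}{2}\bigl(2s(t+1)-(\lambda-\mu)-\sqrt{\Delta}\bigr),
\]
\[
2\omega_{col}-a_2=\tfrac{1}{2}\bigl(4s-2t+2-(\lambda-\mu)-\sqrt{\Delta}\bigr),
\]
\[
2\omega_{col}+\omega_{row}-2-a_2=\tfrac{1}{2}\bigl(4s-(\lambda-\mu)-\sqrt{\Delta}\bigr);
\]
the factors of $\tfrac{1}{2}$ cancel when these are substituted into (\ref{ldpc1}) and (\ref{ldpc2}), and the two Tanner bounds become precisely the two fractions inside the maximum. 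The main obstacle is really only the middle step, namely correctly identifying $a_2$ from the possibly degenerate spectrum of $MM^T$; once that is settled, the theorem follows by direct substitution.
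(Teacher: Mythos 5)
Your proposal is correct and follows exactly the route the paper takes: the paper derives the spectrum of $MM^T$ from $MM^T=A+(t+1)I$ and Lemma~\ref{012}, then states the theorem as a direct substitution of $\omega_{col}=s+1$, $\omega_{row}=t+1$, $a_1=(s+1)(t+1)$ and $a_2=\frac{(\lambda-\mu)+\sqrt{\Delta}}{2}+t+1$ into the Tanner bounds (\ref{ldpc1}) and (\ref{ldpc2}). Your extra care in confirming that $a_2$ really is the second largest distinct eigenvalue (and that $a_1$ is simple, via connectedness of the point graph) is a detail the paper leaves implicit, but the argument and the resulting algebra coincide with the paper's.
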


\subsection{$2$-rank of $M$}

In this subsection, we use a result of Brouwer \cite{ab1} on the $p$-rank of the adjacency matrix of strongly regular graph
to discuss the $2$-rank of $M$.
\begin{lem}\cite{ab1}\label{23}
If $A$ is the adjacency matrix of a strongly regular graph with eigenvalues $k$, $u_1$, $u_2$ with multiplicities $1$, $f_1$, $f_2$ respectively, and the matrix $N$ is defined as $N=A+bJ+cI$  for some $b$ and $c$, then $N$ has eigenvalues $\theta_0=k+bv+c$, $\theta_1=u_1+c$, $\theta_2=u_2+c$ with multiplicities $1$, $f_1$, $f_2$ respectively. Further,

$(1)$ if none eigenvalue $\theta_i$ of $N$ is $\equiv0($mod $ p)$, $i=0,1,2$, then $rank_p(N)=v$;

$(2)$ if precisely one eigenvalue $\theta_i$ of $N$ is $\equiv0($mod $ p)$, then $rank_p(N)=v-m_i$, where $m_i$ is the multiplicity of the eigenvalue;

$(3)$ if $\theta_0\equiv\theta_1\equiv0($mod $ p)$ and $\theta_2\not\equiv0($mod $ p)$, then $rank_p(N)=f_2$ if $p|e$, and $rank_p(N)=f_2+1$ otherwise.
Similarly, if $\theta_0\equiv\theta_2\equiv0($mod $p)$ and $\theta_1\not\equiv0($mod $p)$, then $rank_p(N)=f_1$ if $p|e$, and $rank_p(N)=f_1+1$ otherwise;

$(4)$ if $\theta_1\equiv\theta_2\equiv0($mod $ p)$, then $rank_p(N)\leq \min\{f_1+1,f_2+1\}$.

In the above, $e=\mu+b^2v+2bk+b(\mu-\lambda)$.
\end{lem}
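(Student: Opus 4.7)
The plan is to first extract the spectrum of $N$ from that of $A$, then derive a quadratic identity connecting $N$ and $J$, and finally handle the four rank cases via this identity together with standard rank inequalities.

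For the spectral claim, I would use that $G$ is $k$-regular: the all-ones vector $\mathbf{1}$ spans the $k$-eigenspace of $A$, while the $u_1$- and $u_2$-eigenspaces lie in $\mathbf{1}^{\perp}$. Since $J\mathbf{1} = v\mathbf{1}$ and $J$ vanishes on $\mathbf{1}^{\perp}$, the matrix $N = A + bJ + cI$ acts on these three eigenspaces by multiplication by $\theta_0, \theta_1, \theta_2$ respectively, with the stated multiplicities. Next I would derive the key identity $(N - \theta_1 I)(N - \theta_2 I) = eJ$ by expanding and substituting the strongly-regular relation $A^2 = (k-\mu)I + (\lambda - \mu)A + \mu J$ together with $AJ = JA = kJ$ and $J^2 = vJ$; using $\theta_1 + \theta_2 = \lambda - \mu + 2c$ and $\theta_1 \theta_2 = c^2 + c(\lambda - \mu) - (k - \mu)$, the coefficients of $I$ and $A$ cancel, leaving precisely $e\cdot J$.

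Part (1) is then immediate from $\det N = \theta_0 \theta_1^{f_1} \theta_2^{f_2}$. For part (2), the single vanishing $\bar\theta_i$ is distinct from the other two reduced eigenvalues, so the $0$-eigenspace of $N$ over $\overline{\mathbb{F}_p}$ has dimension exactly $m_i$, giving rank $= v - m_i$. For part (3) with $\theta_0 \equiv \theta_1 \equiv 0 \pmod p$ and $\theta_2 \not\equiv 0$, the matrix $N - \theta_2 I$ has rank $v - f_2$ (its $0$-eigenspace is the $u_2$-eigenspace of $A$), and the key identity modulo $p$ becomes $N(N - \theta_2 I) \equiv eJ$. When $p \mid e$ this forces $\mathrm{Im}(N - \theta_2 I) \subseteq \mathrm{Ker}(N)$, giving rank$_p(N) \leq f_2$, matched by $N$ acting invertibly on the $\theta_2$-eigenspace; when $p \nmid e$, the Frobenius rank inequality yields rank$_p(N) \leq f_2 + 1$, while $\mathbf{1} \in \mathrm{Im}(eJ) \subseteq \mathrm{Im}(N)$ is linearly independent of the $\theta_2$-eigenspace, forcing equality. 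Part (4) follows by analogous upper-bound arguments, applied to $\theta_1$ or $\theta_2$ in the role played above by $\theta_2$.

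The main obstacle I expect is producing the matching lower bound in the "$p \nmid e$" sub-case of part (3): one must argue not only that a specific vector lies in $\mathrm{Im}(N)$, but also that it remains independent from the nonzero-eigenvalue eigenspaces of $N$ after reduction modulo $p$, which requires controlling how the rational spectral decomposition descends to characteristic $p$. Part (4) is intrinsically weaker because, when both non-trivial eigenvalues vanish modulo $p$, no surviving nonzero-eigenvalue spectral component is available to force a matching lower bound; hence only the inequality rank$_p(N) \leq \min\{f_1+1, f_2+1\}$ is claimed.
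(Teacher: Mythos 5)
The paper itself offers no proof of this lemma; it is quoted from Brouwer and Van Eijl (reference \cite{ab1}), so your proposal can only be measured against the argument in that source. Most of your plan is sound and close in spirit to theirs: the spectral computation is standard; the identity $(N-\theta_1 I)(N-\theta_2 I)=eJ$ is correct (the coefficients of $I$ and $A$ do cancel and the $J$-coefficient is exactly $e=\mu+b^2v+2bk+b(\mu-\lambda)$); part (1) follows from the determinant; and parts (2) and (3) go through as sketched, provided you add one justification you currently leave implicit: that the geometric multiplicity of the relevant eigenvalue of $\bar N$ over $\mathbb{F}_p$ equals its algebraic multiplicity. This holds because $\prod_j(x-\bar\theta_j)$ annihilates $\bar N$ and the linear factor for the eigenvalue in question occurs in that product exactly once, so the eigenvalue is semisimple; without this, ``the $0$-eigenspace has dimension exactly $m_i$'' is only the inequality $\dim\ker\bar N\le m_i$, i.e.\ $\mathrm{rank}_p(N)\ge v-m_i$, and the reverse direction is unproved.

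The genuine gap is part (4). You claim it ``follows by analogous upper-bound arguments,'' but both mechanisms you used in part (3) collapse there. When $\theta_1\equiv\theta_2\equiv 0\ (\mathrm{mod}\ p)$ you have $\overline{N-\theta_2 I}=\bar N$, so you no longer know that $\mathrm{rank}_p(N-\theta_2 I)=v-f_2$ --- that is precisely the quantity you are trying to bound --- and the key identity degenerates to $\bar N^2=\bar e\bar J$, to which the Frobenius inequality gives only $\mathrm{rank}_p(N)\le (v+1)/2$, strictly weaker than $\min\{f_1+1,f_2+1\}$ whenever $f_1\ne f_2$. The missing idea is elementary but different: for any integer matrix $M$ one has $\mathrm{rank}_p(M)\le\mathrm{rank}_{\mathbb{Q}}(M)$, and since $\bar\theta_j=0$ gives $\bar N=\overline{N-\theta_j I}$, it follows that $\mathrm{rank}_p(N)\le\mathrm{rank}_{\mathbb{Q}}(N-\theta_j I)=v-f_j=f_{3-j}+1$ for $j=1,2$, which is exactly the claim. (The same inequality also yields the upper bounds in (2) and (3) with less effort, and is the route the cited source takes.)
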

For $M$, the incidence matrix of an SRPG$(s,t;\alpha_1,\alpha_2,\dots,\alpha_r;\lambda,\mu)$, we can define $N=MM^T=A+(t+1)I$, and thus $k=s(t+1)$, $u_1,u_2=\frac{(\lambda-\mu)\pm\sqrt{\Delta}}{2}$, $\theta_0=(s+1)(t+1)$, $\theta_1,\theta_2=\frac{(\lambda-\mu)\pm\sqrt{\Delta}}{2}+t+1$ and $e=\mu$.
Obviously, $\lambda-\mu\equiv\theta_1+\theta_2($mod $2)$.
We may obtain $rank_2(MM^T)$ by Lemmas \ref{012} and \ref{23}.

When $\theta_1+\theta_2\equiv0 ($mod $2)$, from Lemma \ref{23}, we can know $rank_2(MM^T)$ in the case of $\theta_1,\theta_2\equiv1($mod $2)$, i.e.,
$$rank_2(MM^T)=v$$
if $\theta_0\equiv1($mod $2)$, or
$$rank_2(MM^T)=v-1$$
if $\theta_0\equiv0($mod $2)$.

When $\theta_1+\theta_2\equiv1($mod $2)$, they are classified as follows.

$(1)$ $\theta_1\equiv0($mod $2)$, $\theta_2\equiv1($mod $2)$. If $\theta_0\equiv0($mod $2)$, then
$$rank_2(MM^T)=\frac{1}{2}\big[(v-1)-\frac{(v-1)(\mu-\lambda)-2k}{\sqrt{\Delta}}\big]$$
when $2|\mu$, or
$$rank_2(MM^T)=\frac{1}{2}\big[(v-1)-\frac{(v-1)(\mu-\lambda)-2k}{\sqrt{\Delta}}\big]+1$$
otherwise. If $\theta_0\equiv1($mod $2)$, then
$$rank_2(MM^T)=\frac{1}{2}\big[(v+1)-\frac{(v-1)(\mu-\lambda)-2k}{\sqrt{\Delta}}\big].$$

$(2)$ $\theta_1\equiv1($mod $2)$, $\theta_2\equiv0($mod $2)$. If $\theta_0\equiv0($mod $2)$, then
$$rank_2(MM^T)=\frac{1}{2}\big[(v-1)+\frac{(v-1)(\mu-\lambda)-2k}{\sqrt{\Delta}}\big]$$
when $2|\mu$, or
$$rank_2(MM^T)=\frac{1}{2}\big[(v-1)+\frac{(v-1)(\mu-\lambda)-2k}{\sqrt{\Delta}}\big]+1$$
otherwise. If $\theta_0\equiv1($mod $2)$, then
$$rank_2(MM^T)=\frac{1}{2}\big[(v+1)+\frac{(v-1)(\mu-\lambda)-2k}{\sqrt{\Delta}}\big].$$

As $rank_2(M)\geq rank_2(MM^T)$, we have a lower bound on the $2$-rank of $M$ for certain choices $s,t,\lambda,\mu$.

\begin{thm}
If $\lambda-\mu\equiv1($mod $2)$, then
$$rank_2(M)\geq \min\big\{\frac{1}{2}\big[(v-1)-\frac{(v-1)(\mu-\lambda)-2k}{\sqrt{\Delta}}\big],\frac{1}{2}\big[(v-1)+\frac{(v-1)(\mu-\lambda)-2k}{\sqrt{\Delta}}\big]\big\}.$$
\end{thm}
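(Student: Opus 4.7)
The strategy is the standard observation that $\mathrm{rank}_2(M)\geq\mathrm{rank}_2(MM^T)$ over $\mathbb{F}_2$, combined with Lemma \ref{23} applied to $N=MM^T=A+(t+1)I$. This corresponds to the choice $b=0$, $c=t+1$ (so that $e=\mu$) in Lemma \ref{23}, and reduces the theorem to a short case analysis that has essentially been carried out already in the paragraph preceding the statement.

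First I would record, from Lemmas \ref{012} and \ref{23}, the eigenvalues $\theta_0=(s+1)(t+1)$ and $\theta_1,\theta_2=\tfrac{\lambda-\mu\pm\sqrt{\Delta}}{2}+(t+1)$ of $N$ with multiplicities $1$, $f_1$, $f_2$. The hypothesis $\lambda-\mu\equiv 1\pmod{2}$ translates to $\theta_1+\theta_2\equiv 1\pmod{2}$, i.e., exactly one of $\theta_1,\theta_2$ is even modulo $2$.

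Next I would run through the four branches indexed by the parity of $\theta_0$ and by which of $\theta_1,\theta_2$ is even. In the two branches where $\theta_0$ is odd, only a single eigenvalue of $N$ vanishes mod $2$, and part (2) of Lemma \ref{23} yields $\mathrm{rank}_2(N)=v-f_i=\tfrac{1}{2}\bigl[(v+1)\mp\tfrac{(v-1)(\mu-\lambda)-2k}{\sqrt{\Delta}}\bigr]$. In the two branches where $\theta_0$ is even, two eigenvalues vanish mod $2$ simultaneously, and part (3) of Lemma \ref{23} yields $\tfrac{1}{2}\bigl[(v-1)\mp\tfrac{(v-1)(\mu-\lambda)-2k}{\sqrt{\Delta}}\bigr]$, with a possible $+1$ correction depending on whether $\mu$ is odd.

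Finally, since the $+1$ correction and the ``$v+1$'' branches each give strictly larger values than their ``$v-1$'' counterparts, the minimum over all six possibilities is precisely $\min\bigl\{\tfrac{1}{2}[(v-1)-X],\tfrac{1}{2}[(v-1)+X]\bigr\}$, where $X=\tfrac{(v-1)(\mu-\lambda)-2k}{\sqrt{\Delta}}$. Chaining this with $\mathrm{rank}_2(M)\geq\mathrm{rank}_2(MM^T)$ concludes the proof. The only thing requiring care is the parity bookkeeping in each sub-branch; no substantive new argument beyond Lemma \ref{23} is needed, which is why I expect essentially no obstacle.
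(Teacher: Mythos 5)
Your proposal is correct and follows essentially the same route as the paper: apply Lemma \ref{23} to $N=MM^T=A+(t+1)I$, note that $\lambda-\mu\equiv\theta_1+\theta_2\pmod 2$ forces exactly one of $\theta_1,\theta_2$ to be even, run the parity cases on $\theta_0$ and $\mu$ to see that $rank_2(MM^T)\in\{f_1,f_1+1,f_2,f_2+1\}$, and conclude via $rank_2(M)\geq rank_2(MM^T)\geq\min\{f_1,f_2\}$. This is precisely the case analysis the paper carries out in the paragraphs preceding the theorem, so no further comment is needed.
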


\subsection{Code girth}
The girth of an LDPC code is also important in the performance.
Let $M$ be the incidence matrix of an SRPG$(s,t;\alpha_1,\alpha_2,\dots,\alpha_r;\lambda,\mu)$ $(\mathcal{P},\mathcal{B})$.
The \emph{Tanner graph} of the LDPC code $C(M)$ has vertex set $\mathcal{P}\cup\mathcal{B}$ with two vertices, $x$ and $y$, connected if and only if $x\in y$ or $y\in x$.
The \emph{girth} of $C(M)$ is the length of the shortest cycle of its Tanner graph.
Obviously, there exists a girth of six if and only if there is some $\alpha_i\geq2,1\leq i\leq r$. Similar to Lemma 5 in \cite{St}, we may have the following result.

\begin{thm}
The exact number of $6$-cycles in the Tanner graph of an LDPC code from an SRPG$(s,t;\alpha_1,\alpha_2,\dots,\alpha_r;\lambda,\mu)$ with some $\alpha_i\geq2,1\leq i\leq r$ is
$\frac{ns(s+1)(\lambda-s+1)}{6}.$
\end{thm}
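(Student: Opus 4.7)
The plan is to count $6$-cycles via a bijection with a suitable class of triangles in the point graph. Since the Tanner graph is bipartite between $\mathcal{P}$ and $\mathcal{B}$ with edges recording incidence, every $6$-cycle must alternate sides, so it has the shape $P_1{-}B_1{-}P_2{-}B_2{-}P_3{-}B_3{-}P_1$ for three pairwise distinct points $P_i$ and three pairwise distinct blocks $B_j$ with $\{P_1,P_2\}\subseteq B_1$, $\{P_2,P_3\}\subseteq B_2$, $\{P_3,P_1\}\subseteq B_3$. In particular the three points are pairwise adjacent in the point graph, and axiom (i) forces each $B_j$ to be the unique block through its pair. Hence such a cycle is completely determined by the unordered triple $\{P_1,P_2,P_3\}$, and conversely every triangle in the point graph whose three pair-joining blocks are distinct yields exactly one $6$-cycle.

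Next I would identify the triangles that do \emph{not} produce a $6$-cycle. Suppose two of the three pair-joining blocks coincide, say the block through $\{P_1,P_2\}$ equals the block through $\{P_2,P_3\}$; then this common block contains all three points and therefore also contains $\{P_1,P_3\}$, so by axiom (i) the third joining block is the same as well. Thus the excluded triangles are exactly those entirely contained in a single block, and the number of $6$-cycles equals (number of triangles in the point graph) $-$ (number of triangles lying in one block).

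Finally I carry out the two counts. The point graph is strongly regular with parameters $(v,k,\lambda,\mu)$, so each of its $vk/2$ edges is in $\lambda$ triangles, giving $vk\lambda/6$ triangles in total. For the second count, each block contains $s+1$ mutually adjacent points and hence $\binom{s+1}{3}$ triangles; different blocks contribute disjoint sets of triangles, because three common points in two distinct blocks would violate axiom (i). Summing over all $n$ blocks gives $n\binom{s+1}{3}=ns(s+1)(s-1)/6$. Subtracting and using $vk=ns(s+1)$, which follows from $k=s(t+1)$ and $v(t+1)=n(s+1)$, collapses the expression to
$$\frac{ns(s+1)\lambda}{6}-\frac{ns(s+1)(s-1)}{6}=\frac{ns(s+1)(\lambda-s+1)}{6},$$
as claimed. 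I do not anticipate a real obstacle; the only genuine content is the clean bijection above, after which the arithmetic is routine. The hypothesis that some $\alpha_i\geq 2$ only guarantees that $6$-cycles actually exist (equivalently, that $\lambda\geq s$); the formula is valid in general and evaluates to $0$ in the degenerate case.
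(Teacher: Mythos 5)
Your proof is correct and takes essentially the same route as the paper's: both identify the $6$-cycles with triangles of the point graph that are not contained in a single block (using axiom (i) to get uniqueness of the joining blocks) and count them via the strong-regularity parameter $\lambda$. The only cosmetic difference is that you subtract the block-contained triangles globally (total $vk\lambda/6$ minus $n\binom{s+1}{3}$), whereas the paper counts the $\lambda-(s-1)$ admissible third points per pair of points within each block and then divides by $3$.
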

\begin{proof}
Suppose $(\mathcal{P},\mathcal{B})$ is an SRPG$(s,t;\alpha_1,\alpha_2,\dots,\alpha_r;\lambda,\mu)$ with some $\alpha_i\geq2,1\leq i\leq r$. Take a block $B\in\mathcal{B}$ and a pair $\{P_1,P_2\}$ of points in $B$,
then $P_1$ is incident with $t$ blocks other than $B$, none of which contains $P_2$.
In these blocks, there are $\lambda-(s-1)$ points incident with $P_1,P_2$.
Then there are $\lambda-s+1$ $6$-cycles in which $P_1$ and $P_2$ are connected. Further,
 there are $\frac{s(s+1)(\lambda-s+1)}{2}$ $6$-cycles containing a pair of points in $B$ as there are $\binom{s+1}{2}$ pairs of points in $B$. Since there are $n$ blocks in total, and a single $6$-cycle includes three pairs of points, the result follows.
\end{proof}

\section{Performance}

In this section, we first discuss the $2$-ranks of $M_1$ and $M_2$, where $M_1,M_2$ are defined in Subsections 3.1 and 3.2, respectively.
Further, we show the performance of the LDPC codes.

\begin{thm}
$rank_2(M_1)=(q-1)^2.$
\end{thm}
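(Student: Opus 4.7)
The plan is to use the standard bridge between the incidence matrix $M_1$ and the adjacency matrix of the point graph, namely $M_1 M_1^T = A + (t+1)I$, and then apply Brouwer's rank formula (Lemma \ref{23}) with $p=2$, $b=0$, $c = t+1 = q-2$ to the strongly regular graph provided by Theorem \ref{ld11}. Once we know $\operatorname{rank}_2(M_1 M_1^T) = (q-1)^2$, the conclusion follows immediately, because $M_1$ is a square matrix of side $(q-1)^2$ and $\operatorname{rank}_2(M_1) \geq \operatorname{rank}_2(M_1 M_1^T)$.

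First I would plug the parameters $(v,k,\lambda,\mu) = ((q-1)^2,(q-2)(q-3),(q-4)^2+1,(q-3)(q-4))$ from Theorem \ref{ld11} into Lemma \ref{012}. A short computation gives $\lambda - \mu = 5-q$ and
\[
\Delta = (\lambda-\mu)^2 + 4(k-\mu) = (q-5)^2 + 8(q-3) = (q-1)^2,
\]
so $\sqrt{\Delta}=q-1$, yielding the adjacency eigenvalues $u_1 = 3-q$ and $u_2 = 2$. Shifting each by $t+1 = q-2$, the three distinct eigenvalues of $N = M_1 M_1^T$ are
\[
\theta_0 = (q-2)^2, \qquad \theta_1 = 1, \qquad \theta_2 = q.
\]

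Next I would observe that because $q$ is odd, all of $\theta_0=(q-2)^2$, $\theta_1=1$ and $\theta_2=q$ are odd, hence none is $\equiv 0 \pmod 2$. Case (1) of Lemma \ref{23} therefore applies with $p = 2$, giving $\operatorname{rank}_2(M_1 M_1^T) = v = (q-1)^2$.

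Finally, using $\operatorname{rank}_2(M_1) \geq \operatorname{rank}_2(M_1 M_1^T) = (q-1)^2$ together with $M_1 \in \mathbb{F}_2^{(q-1)^2 \times (q-1)^2}$, the reverse inequality is trivial, so $\operatorname{rank}_2(M_1) = (q-1)^2$. There is no real obstacle here beyond the arithmetic: the only point to double-check is the identity $(q-5)^2 + 8(q-3) = (q-1)^2$, which makes $\sqrt{\Delta}$ an integer and forces the clean integer eigenvalues $1$ and $q$ that together with $(q-2)^2$ are all odd.
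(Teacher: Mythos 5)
Your proposal is correct and follows essentially the same route as the paper: both compute the eigenvalues $(q-2)^2$, $q$, $1$ of $M_1M_1^T$ from the strongly regular graph parameters, note that all three are odd, apply case (1) of Lemma \ref{23} to get $\operatorname{rank}_2(M_1M_1^T)=(q-1)^2$, and conclude via $\operatorname{rank}_2(M_1)\geq\operatorname{rank}_2(M_1M_1^T)$ and the fact that $M_1$ is square of order $(q-1)^2$. Your version merely spells out the arithmetic (the identity $\Delta=(q-1)^2$) that the paper leaves implicit.
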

\begin{proof}
Suppose $N_1=M_1M_1^T$, then $N_1$ has eigenvalues $\theta_0=(q-2)^2\equiv1($mod $2)$, $\theta_1=q\equiv1($mod $2)$,
$\theta_2=1\equiv1($mod $2)$ from Theorem \ref{ld11}.
So $rank_2(N_1)=(q-1)^2$ from Lemma \ref{23}.
Since $rank_2(M_1)\geq rank_2(N_1)$ and $M_1$ is a square matrix with order $(q-1)^2$, the $2$-rank of $M_1$ is $(q-1)^2$.
\end{proof}
\begin{thm}
$rank_2(M_2)\geq q^4-q^3-q^2+q.$
\end{thm}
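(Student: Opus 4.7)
The plan is to mimic the preceding theorem on $\mathrm{rank}_2(M_1)$, but now applied to the incidence matrix $M_2$ of the strongly regular generalized partial geometry $(\mathcal{L}_1,\mathcal{H}_1)$, using the relation $\mathrm{rank}_2(M_2)\ge\mathrm{rank}_2(M_2M_2^T)$ and Lemma~\ref{23}. The key identity, reiterated just before Lemma~\ref{012}, is $A=M_2M_2^T-(t+1)I$, where $A$ is the adjacency matrix of the point graph $G_2$; thus every eigenvalue of $M_2M_2^T$ is obtained from an eigenvalue of $A$ by adding $t+1$.

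First I would read off the parameters of $G_2$ from Theorem~\ref{ld12} and the description of $(\mathcal{L}_1,\mathcal{H}_1)$ as a PG$(q-1,q(q^2-1)-1;q-2,q-1,q)$: in particular $s+1=q$, $t+1=q(q^2-1)$, and $v=q^4$, $k=q(q-1)(q^2-1)$, $\lambda=q(q^3-2q^2-q+3)$, $\mu=q(q-1)(q^2-q-1)$. Plugging these into Lemma~\ref{012}, a short computation gives $\lambda-\mu=-q(q-2)$ and $\Delta=(\lambda-\mu)^2+4(k-\mu)=q^4$, so the nontrivial eigenvalues of $A$ are the integers $u_1=-q(q-1)$ and $u_2=q$. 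Shifting by $t+1=q(q^2-1)$ then produces the eigenvalues of $N_2:=M_2M_2^T$:
\begin{equation*}
\theta_0=q^2(q^2-1),\qquad \theta_1=q^2(q-1),\qquad \theta_2=q^3.
\end{equation*}
Since $q$ is odd, $\theta_0$ and $\theta_1$ are even while $\theta_2$ is odd, so we are exactly in case~(3) of Lemma~\ref{23} with $p=2$ and $\theta_0\equiv\theta_1\equiv 0$, $\theta_2\not\equiv 0\pmod 2$. Moreover $e=\mu=q(q-1)(q^2-q-1)$ is even, so the $p\mid e$ branch applies and $\mathrm{rank}_2(N_2)=f_2$, the multiplicity of $u_2=q$.

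It remains to compute $f_2$. Using the two linear relations $f_1+f_2=v-1$ and $u_1f_1+u_2f_2=-k$ from Lemma~\ref{20} (or the explicit formula in Lemma~\ref{012}), one solves
\begin{equation*}
f_2=\frac{k+(v-1)(-u_1)}{u_2-u_1}=\frac{q(q-1)^2(q+1)+(q^4-1)\cdot q(q-1)}{q^2},
\end{equation*}
which simplifies to $f_2=q^4-q^3-q^2+q$. Combining this with $\mathrm{rank}_2(M_2)\ge\mathrm{rank}_2(M_2M_2^T)=f_2$ yields the advertised bound.

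The proof is essentially mechanical once the correct case of Lemma~\ref{23} is identified, so the only real pitfall is bookkeeping: one must (a) be careful that $f_2$ in Lemma~\ref{23}(3) really is the multiplicity of the unique odd eigenvalue $\theta_2$, i.e.\ of $u_2=q$ rather than $u_1=-q(q-1)$, and (b) keep track of the parity of $\mu$ to land in the $p\mid e$ subcase. Both are straightforward here because $q$ is odd forces $q-1$ even, which simultaneously makes $\theta_0,\theta_1$ and $\mu$ even while leaving $\theta_2=q^3$ odd, so the inequality comes out cleanly with no case split.
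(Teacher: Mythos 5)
Your proof is correct and follows essentially the same route as the paper: compute the eigenvalues of $N_2=M_2M_2^T$ (namely $q^2(q^2-1)$, $q^2(q-1)$ and $q^3$, of which only $q^3$ is odd), invoke case (3) of Lemma~\ref{23} with $e=\mu$ even to get $\mathrm{rank}_2(N_2)$ equal to the multiplicity of the odd eigenvalue, and conclude via $\mathrm{rank}_2(M_2)\geq\mathrm{rank}_2(N_2)$. The only blemish is a sign slip in your displayed formula for that multiplicity: from $u_1f_1+u_2f_2=-k$ one gets $f_2=\frac{-k+(v-1)(-u_1)}{u_2-u_1}$, not $\frac{k+(v-1)(-u_1)}{u_2-u_1}$ (as written the numerator is not divisible by $q^2$); with the correct sign it does evaluate to $q^4-q^3-q^2+q$, matching your stated conclusion.
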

\begin{proof}
Suppose $N_2=M_2M_2^T$, then $N_2$ has eigenvalues $\theta_0=q^2(q^2-1)\equiv0($mod $2)$, $\theta_1=q^3\equiv1($mod $2)$,
$\theta_2=q^2(q-1)\equiv0($mod $2)$ from Theorem \ref{ld12}.
Since $2|q(q-1)(q^2-q-1)$, $rank_2(N_2)=f_1=q^4-q^3-q^2+q$, where $f_1$ is the multiplicity of $\theta_1$ from Lemma \ref{23}.
So $rank_2(M_2)\geq rank_2(N_2)=q^4-q^3-q^2+q$.
\end{proof}
Using the software package MAGMA, we have known the $2$-ranks of $M_2$ are $81,625,$ $2401$ for $q=3,5,7$, respectively. From the  data, we give a conjecture on the $2$-rank of $M_2$.

\begin{conj}
$rank_2(M_2)=q^4$.
\end{conj}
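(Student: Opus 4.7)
The conjecture asserts that the $q^{4}$ rows of $M_{2}$, indexed by $N\in\mathbb{F}_{q}^{2\times 2}$, are $\mathbb{F}_{2}$-linearly independent, strengthening the lower bound $\operatorname{rank}_{2}(M_{2})\geq q^{4}-q^{3}-q^{2}+q$ obtained above to equality with $q^{4}$. I would attack this dually: a linear dependence $\sum_{N\in S}\operatorname{row}_{N}\equiv 0\pmod 2$ supported on a set $S\subseteq\mathbb{F}_{q}^{2\times 2}$ is equivalent to the parity condition $|S\cap H_{B,C}|\equiv 0\pmod 2$ for every admissible pair $(B,C)$, and the task is to force $S=\emptyset$.

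The first step is structural. From Section~3.2, $L_{N}\in H_{B,C}$ iff $\phi_{B}(N):=B^{T}N^{T}+NB=-C$. For fixed $B\in GL_{2}(\mathbb{F}_{q})$ the map $\phi_{B}$ factors as $N\mapsto NB$ followed by symmetrisation $M\mapsto M+M^{T}$; since $q$ is odd, it is $\mathbb{F}_{q}$-linear and surjective onto $\operatorname{Sym}_{2}(\mathbb{F}_{q})$ with $1$-dimensional kernel $\mathbb{F}_{q}JB^{-1}$, where $J=\bigl(\begin{smallmatrix}0 & 1\\ -1 & 0\end{smallmatrix}\bigr)$. Thus each column of $M_{2}$ is the indicator of an affine coset of a $1$-dimensional subspace $\mathbb{F}_{q}Y$ with $Y\in GL_{2}(\mathbb{F}_{q})$, which I shall call an \emph{invertible line}. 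There are exactly $q^{3}-q$ such lines, and the conjecture reduces to the combinatorial statement: \emph{if $S\subseteq\mathbb{F}_{q}^{2\times 2}$ meets every coset of every invertible line in an even number of elements, then $S=\emptyset$.}

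The natural next move is a contradiction argument. Given $N_{0}\in S$, aim to produce an invertible line $\mathbb{F}_{q}Y$ with $(N_{0}+\mathbb{F}_{q}Y)\cap S=\{N_{0}\}$, contradicting the parity condition since $1$ is odd. A ``bad'' invertible line through $N_{0}$ is one that contains some $N_{1}-N_{0}$ with $N_{1}\in S$ and $\operatorname{rank}(N_{1}-N_{0})=2$; each $S$-neighbour of $N_{0}$ in the point graph $G_{2}$ contributes to a unique such line with $(q-1)$-fold coset ambiguity, so the number of bad lines is bounded by the $G_{2}$-degree of $N_{0}$ inside $S$. To push it strictly below $q^{3}-q$, I would exploit the large automorphism group of $(\mathcal{L}_{1},\mathcal{H}_{1})$ supplied by Lemma~\ref{th13}: block-diagonal $T$'s give a $GL_{2}(\mathbb{F}_{q})\times GL_{2}(\mathbb{F}_{q})$-action by $N\mapsto PNR^{-1}$ (three orbits by rank), while block-upper-triangular $T$'s induce M\"obius-type translations on the rank-$2$ stratum. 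Reducing to a canonical $N_{0}$, switching $S\leftrightarrow\mathbb{F}_{q}^{2\times 2}\setminus S$ when $|S|$ is large, and a global double count of incidences between $S$ and the family of all cosets of invertible lines would close out the argument.

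The main obstacle is precisely this last combinatorial step. The $q^{3}-q$ invertible lines form only a fraction of the $q^{3}+q^{2}+q+1$ one-dimensional subspaces of $\mathbb{F}_{q}^{2\times 2}$; the remaining $(q+1)^{2}$ rank-$1$ lines yield no parity constraints, and sets $S$ aligned with the rank-$1$ stratification are not obviously ruled out by incidence counting alone. A complete proof is likely to need either a simultaneous use of parity constraints across several $B$'s, or a representation-theoretic analysis of the $\mathbb{F}_{2}$-module generated by the rows of $M_{2}$ under the stabiliser of $L_{0}$. The MAGMA verifications at $q=3,5,7$ make the conjecture morally certain, but bridging the gap from $q^{4}-q^{3}-q^{2}+q$ to $q^{4}$ appears to require a genuinely new algebraic ingredient beyond the eigenvalue methods underlying the preceding lower bound.
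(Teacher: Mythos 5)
First, a point of order: the paper does not prove this statement. It is stated as a conjecture, supported only by MAGMA computations for $q=3,5,7$, and the Conclusion explicitly says the $2$-rank of $M_2$ ``needs further proof.'' So there is no ``paper proof'' to match yours against. Your write-up is also, by its own admission, not a proof but a strategy. What you do establish is correct and actually goes beyond the paper: since $L_N\in H_{B,C}$ iff $(NB)^T+NB=-C$ and $q$ is odd, each column of $M_2$ is indeed the indicator of a coset of the line $\mathbb{F}_qJB^{-1}$ with $J=\left(\begin{smallmatrix}0&1\\-1&0\end{smallmatrix}\right)$; the $q^4(q^2-1)$ columns are exactly the indicators of all $q^3$ cosets of all $q^3-q$ invertible lines; and the conjecture is equivalent to the clean combinatorial assertion that a set $S\subseteq\mathbb{F}_q^{2\times2}$ meeting every such coset evenly must be empty. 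That reduction is a genuine contribution.

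The gap is the decisive step, and it is worth recording why your proposed local argument cannot close it even in principle. Fix $N_0$ and sum the parity constraints over all $q^3-q$ invertible lines through $N_0$ (an even number, since $q$ is odd): every $N$ with $N-N_0$ invertible lies on exactly one such line and every other $N$ on none, so the sum collapses to $\sum_{N\sim N_0}\mathbf{1}_S(N)\equiv 0\pmod 2$, i.e.\ $A\mathbf{1}_S\equiv 0$ where $A$ is the adjacency matrix of $G_2$. Since $t+1=q(q^2-1)$ is even, $M_2M_2^T\equiv A\pmod 2$, so these aggregated constraints carry precisely the information already exploited by the eigenvalue bound $\operatorname{rank}_2(M_2)\geq\operatorname{rank}_2(M_2M_2^T)=q^4-q^3-q^2+q$, whose mod-$2$ kernel has dimension $q^3+q^2-q>0$. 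Any proof must therefore use individual coset constraints (or constraints combined across several lines in a non-pencil pattern), not counts of ``bad'' lines through a single point; your fallback suggestions (complementation, global double counting, the $GL_2\times GL_2$ action) are plausible directions but are not carried out, and the rank-$1$ stratum you identify as the obstruction is exactly where the missing idea has to live. As it stands, the conjecture remains open, and your text should be presented as a reduction and a programme, not as a proof.
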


We take a strongly regular generalized partial geometry constructed in Subsection 3.2, and compare the performance of LDPC code from it with that of randomly constructed code with the same code length in the additive white Gaussian noise (AWGN) channel, using the sum-product decoding algorithm from \cite{good}.
In Fig.1, the parity-check matrices of code from SRPG$(2,23;1,2,3;27,30)$ ($q=3$) in Subsection 3.2 and random code are $81\times648$ matrices.\vspace{0.1cm}

\begin{minipage}{0.4\linewidth}
{\small\textbf{Fig.1} Performance of an LDPC code in an AWGN channel using sum-product decoding.
A $(3,24)$-regular LDPC code from  SRPG$(2,23;$ $1,2,3;27,30)$ is compared with a randomly constructed $(3,24)$-regular LDPC code with the same length using a maximum of 1000 iterations. Their code rates are 0.875 and 0.878 respectively.}
\end{minipage}
\begin{minipage}{0.6\linewidth}
\centerline{\includegraphics[width=0.8\linewidth]{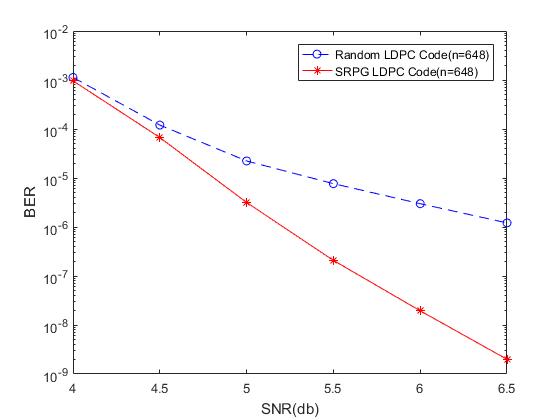}}\label{f1}
\end{minipage}
\vspace{0.1cm}

Fig.1 shows the performance of an LDPC code derived from the SRPG$(2,23;$ $1,2,3;$ $27,30)$. Compared with a randomly constructed LDPC code with the same length and weight, the BER of LDPC code from SRPG$(2,23;1,2,3;27,30)$ is lower than that of the randomly constructed code. This property shows that the LDPC code from strongly regular generalized partial geometry performs well.

\section{Conclusion}

In this paper, we introduce strongly regular generalized partial geometries of grade $r$ and present the related LDPC codes.
We construct two classes of strongly regular generalized partial geometries of grade $3$ based on quadrics in PG$(2,q)$ and PG$(3,q)$.
Besides,  we discuss some LDPC codes  by considering the combinatorial structures of strongly regular generalized partial geometries  and derive bounds on minimum distance, rank and girth for them. Further, LDPC codes from strongly regular generalized partial geometries offer improved error-correction performance over randomly constructed LDPC codes.
However, for the $2$-rank of $M_2$ defined in Subsections 3.2, we only give a conjecture, which needs further proof.\vspace{0.3cm}

\noindent {\bf Acknowledgements}

The authors wish to thank the referees for the hard work and the constructive comments.\vspace{0.3cm}

\noindent {\bf Declaration of competing interest}\vspace{0.3cm}

The authors declare that they have no known competing financial interests or personal relationships that could have
appeared to influence the work reported in this paper.\vspace{0.3cm}

\noindent {\bf Data availability}

No data was used for the research described in the paper.\vspace{0.3cm}

\end{CJK*}

\end{document}